\title{Linear embeddings of graphs and graph limits}
\author[Chuangpishit et al.]{Huda Chuangpishit \and Mahya Ghandehari\and Matt Hurshman \and Jeannette Janssen \and Nauzer Kalyaniwalla}
\address{Department of Mathematics \& Statistics, Dalhousie University, Halifax, Nova Scotia, Canada, B3H 3J5}
\subjclass{Primary 46L07, 47B47.} 
\keywords{graph limit, cut-norm, graphon, geometric graph, unit interval graph, spatial graph model}
\date{\today}
\newtheorem{theorem}{Theorem}[section]
\newtheorem{corollary}[theorem]{Corollary}
\newtheorem{definition}[theorem]{Definition}
\newtheorem{conjecture}[theorem]{Conjecture}
\newtheorem{lemma}[theorem]{Lemma}
\newtheorem{proposition}[theorem]{Proposition}
\newcommand{\cal}{\mathcal}
\newcommand{\Rrr}{{\mathbb R}}
\def\sup{{\rm sup}}
\def\Pr{{\rm Pr}}
\def\Nnn{{\mathbb N}}
\begin{document}

\begin{abstract} Consider a random graph process where vertices are chosen from the interval $[0,1]$, and edges are chosen independently at  random, but so that, for a given vertex $x$, the probability that there is an edge to a vertex $y$ decreases as the distance between $x$ and $y$ increases. We call this a random graph with a linear embedding. 

We define a new graph parameter $\Gamma^*$, which aims to measure the similarity of the graph to an instance of a random graph with a linear embedding. For a graph $G$, $\Gamma^*(G)=0$ if and only if $G$ is a unit interval graph, and thus a deterministic example of a graph with a linear embedding. 

We show that the behaviour of $\Gamma^*$ is consistent with the notion of convergence as defined in the theory of dense graph limits. In this theory, graph sequences converge to a symmetric, measurable function on $[0,1]^2$. 
We define an operator $\Gamma$ which applies to graph limits, and which assumes the value zero precisely for graph limits that have a linear embedding. We show that, if a graph sequence $\{ G_n\}$ converges to a function $w$, then $\{ \Gamma^*(G_n)\}$ converges as well. Moreover, there exists a function $w^*$ arbitrarily close to $w$ under the box distance, so that $\lim_{n\rightarrow \infty}\Gamma^*(G_n)$ is arbitrarily close to $\Gamma (w^*)$.
\end{abstract}

\keywords{Graph limits, interval graphs, linear embedding,  random graphs}

\maketitle

\section{Introduction}
Consider the following random graph model on $n$ vertices. Vertices are randomly chosen from the interval $[0,1]$ according to a given distribution. Then, for each pair of vertices $x,y$, independently, an edge is added with probability $w(x,y)$, where $w:[0,1]^2\rightarrow [0,1]$ is a symmetric, measurable function.

In this article, we are interested in the special case where $w$ is increasing towards the diagonal. Specifically, for $x<y$, $w(x,y)$ decreases as $y$ increases or $x$ decreases. Such a random graph has a linear geometric interpretation: vertices are embedded in the line segment $[0,1]$, and live in a probability landscape where link probabilities decrease as the linear distance between points increases. We will refer to this as a random graph with a {\sl linear embedding}. 

Consider now the problem of recognizing graphs produced by a random graph process with a linear embedding. If the labels of the vertices are provided, this question may be answered by regular statistical methods. When only the isomorphism type of the graph is given, the question becomes 
more complicated. We address the question of how to recognize graphs whose structure is consistent with that of a random graph with a linear embedding. 

Recognition is easy in the special case of {\sl unit interval graphs}, or {\sl one-dimensional geometric graphs}. Here, the selection of vertices is random, but the edge formation is deterministic. In other words, the function $w$ governing edge formation only takes values in $\{ 0,1\}$. In this paper, we introduce a graph parameter $\Gamma^*$ which aims to measure  the similarity of the graph to an instance of a random graph with a linear embedding.
We show that $\Gamma^*$ of a given graph equals zero 
if and only if the graph is a one-dimensional geometric graph (Proposition \ref{Gamma*-geometric}). We then consider the behaviour of  $\Gamma^*$ when it is applied to convergent sequences of graphs $\{ G_n\}$, where convergence is defined as in the theory of graph limits as developed by Lov{\'a}sz and Szegedy in \cite{lovaszszegedy2006}.  

In this theory, convergence is defined based on homomorphism densities, and the limit is a symmetric, measurable function. The theory is developed and extended to sequences of random graphs by Borgs {\em et al.}~in \cite{borgsI2008,borgsII2007,borgs2011} and is  explored further by Lov{\'a}sz and  others (see for example \cite{borgs2010,chung2012,lovaszszegedy2010}. See also the recent book \cite{lovaszBook2012}). 
As shown by Diaconis and Janson in \cite{diaconisjanson2008}, the theory of graph limits is closely connected to the probabilistic theory of exchangeable arrays. A different view, where the limit object is referred to as a {\sl kernel}, is provided by Bollob{\'a}s, Janson and Riordan in \cite{bollobas2007,bollobasjansonriordan2009}. The connection with the results of Borgs {\sl et al.}~and an extension of the theory to sparse graphs are presented in \cite{bollobas2009}.

Homomorphism densities characterize the isomorphism type of a (twine-free) graph. A graph sequence $\{ G_n\}$ converges if and only if all of the homomorphism densities of the graphs $G_n$ converge. Moreover, the limits of all these homomorphism densities can be obtained from a symmetric, measurable function $w$ on $[0,1]^2$ which represents the ``limit object''. Thus, $w$ encapsulates the local structure of the graphs in the sequence. 
Conversely, the randomly growing graph sequence  obtained from $w$, according to the process described earlier, will asymptotically exhibit the same homomorphism densities, and thus have a similar structure. 

Let $\{G_n\}$ be a sequence of graphs converging to a symmetric, measurable function $w$. (One may think of this sequence as an instance of a randomly growing graph sequence generated by $w$.) Can we recognize whether this sequence is generated by a random graph process with a linear embedding?
To answer this question, we introduce a parameter $\Gamma$, which applies to symmetric, measurable functions. For such a function $w$, $\Gamma(w)=0$ if and only if the function $w$ is diagonally increasing (Proposition \ref{prop:Gamma0iffDI}). A random graph process with a linear embedding is simply one for which the corresponding function $w$ satisfies $\Gamma(w)=0$.

The main result in this paper regards the relation between $\Gamma^*$ as applied to a convergent graph sequence $\{ G_n\}$, and $\Gamma$ applied to the limiting function $w$. Firstly, every graph $G$ can also be regarded as a $\{0,1\}$-valued function $w_G$. It is not hard to prove that, for a given graph $G$,  $\Gamma^*(G)$ and $\Gamma (w_G)$ are asymptotically equal (Theorem \ref{thm:approximation-of-gamma} and Corollary \ref{cor:gamma-graph-function}). A harder question concerns the relation between the sequence of $\Gamma^*$-values of the graphs, $\{\Gamma^*(G_n)\}$, and the $\Gamma$-value of the limiting function, $\Gamma (w)$. This question is addressed in Section \ref{continuity}.
To obtain any continuity type results, we need to address the fact that functions $w$ representing the limit of a converging graph sequence are not unique. Moreover, $\Gamma$ can attain different values for different functions representing the same limit object. Thus, we introduce $\tilde{\Gamma}$ as the infimum of $\Gamma(w)$, where the infimum is  taken over  equivalence classes of functions that all have box distance 0 to each other. Note that every equivalence class consists of functions that all represent the same limit object.

Our main result (Theorem \ref{thm:Gamma-cts}) shows that $\tilde{\Gamma}$ is continuous. It follows that, for a  graph sequence $\{G_n\}$ converging to a function $w$, the sequence $\{ \Gamma^* (G_n)\}$ converges to $\tilde{\Gamma}(w)$, the infimum of $\Gamma(w^*)$ over all functions $w^*$  which represent the limit of the converging sequence $\{G_n\}$. Thus,  there exists a function $w^*$ arbitrarily close to $w$ under the box distance, so that $\lim_{n\rightarrow\infty}\Gamma^*(G_n)$ is arbitrarily close to $\Gamma (w^*)$.

Our findings  justify the conclusion that, for large graphs,  $\Gamma^*(G)$ does give an indication of compatibility of $G$ with a random graph model with linear embedding. In particular for a converging graph sequence $\{G_n\}$, we have  $\Gamma^*(G_n)  \rightarrow 0$ as $n \rightarrow \infty$ {\sl if and only if} $\{ G_n\}$ converges to a function $w$ which has $\Gamma $-value arbitrarily small (Corollary \ref{cor:contin_Gamma*}).

The approach we take in this paper  
was inspired by a paper by Bollob{\' a}s, Janson and Riordan on monotone graph limits (see \cite{bollobas2011}). In that paper, a graph parameter $\Omega$ is introduced, which assumes value zero precisely for threshold graphs. It is then shown that a converging sequence of graphs for which $\Omega$ tends to zero has a limit that is a {\em monotone} function. Thus, monotone graph limits can be seen as generalizations of threshold graphs. 

The flavour of the results in this paper is similar to those on monotone graph limits. Namely, we show  that diagonally increasing graph limits can be seen as generalizations of unit interval graphs.  
However, monotone functions have ``nice'' properties that do not carry over to diagonally increasing functions.
So  there are significant differences where the proofs are concerned. Specifically,   the equivalence class of functions obtained by applying measure preserving maps to a given function $w$ contains at most one monotone function. This is not true for diagonally increasing functions, which is why we need to introduce the parameter $\tilde{\Gamma}$, which complicates the statement and proof of the main result. 
Another major difference is that, for monotone functions, $L^1$-distance and box distance are equivalent. This however is not true for diagonally increasing functions. Thus we need to use entirely different methods to prove our continuity result than the ones developed in \cite{bollobas2011}.

Diaconis, Holmes and Janson also consider the limits of threshold graphs (see \cite{diaconis2008}), and  the limits of interval graphs (see \cite{diaconis2011}). Note that the one-dimensional geometric graphs studied in our paper are a special class of interval graphs; namely unit interval or proper interval graphs. However, the authors of \cite{diaconis2011} focus on different properties and generalizations of interval graphs, and their results do not apply to the problems we consider here. 

Finally, we say a few words about the motivation behind this paper. Our results show that a graph parameter, $\Gamma^*$, applied to graphs of increasing size, can help recognize graphs that are ``close'' to a diagonally increasing function, and thus resemble a random graph with a linear embedding. Therefore, we can interpret $\Gamma^*$ as a parameter that helps recognize the (one-dimensional) spatial embedding underlying the graph. 

The question of recognizing graphs that have a spatial embedding is motivated by the study of real-life complex networks. 
If one assumes that such networks are the manifestation 
of an underlying reality, then a useful way to model these networks is to take a latent space approach.  In this approach, the formation of the graph is informed by the hidden spatial reality. The graph formation is modelled as a stochastic process, where the probability of a link occurring  between two vertices decreases as their metric distance increases. 

The spatial reality can be used to represent attributes of the vertices which are inaccessible or unknown, but which are assumed to inform link formation. For example, in a social network, vertices may be considered as members of a social space, where the coordinates represent the interests and background of the users. Given only the graph, such a spatial model allows us to mine the underlying spatial reality. This approach was taken by Hoff {\sl et al.}~in \cite{hoff2002}.  In most cases, spatial models are formed on spaces of dimension at least two, but a one-dimensional (linear) spatial  model, the {\em niche model}, is proposed in \cite{williamsmartinez2000} to model food webs. 
Our result can be interpreted as a step towards the recognition of graphs that can be well-modelled by a linear spatial model.

This paper is organized as follows.  In Section \ref{Background}, we briefly review the results from the theory of graph limits. 
In Section \ref{Gamma*}, we give precise definitions for the concepts of spatial embedding and linear embedding for a random graph model, introduce the graph parameter $\Gamma^*$, and show that it characterizes  one-dimensional geometric graphs. In Section \ref{Gamma}, we introduce a continuous analogue of $\Gamma^*$, called $\Gamma $, which applies to symmetric measurable functions.
In Section \ref{relation} we show that, for any graph $G$, $\Gamma^* (G)$ is asymptotically equal to the value of $\Gamma$ applied to the $\{ 0,1\}$-valued function representing $G$. In Section \ref{continuity}  we introduce the generalized parameter $\tilde{\Gamma}$. Our main result is Theorem  \ref{thm:Gamma-cts} which shows that $\tilde{\Gamma}$ is continuous. In Corollary \ref{cor:contin_Gamma*}, we interpret this continuity result for converging graph sequences.

\section{Preliminaries: graph limits}

\label{Background}
In this section we summarize the basic definitions and results from the theory of graph limits, insofar as they are relevant to this paper. For more background, the reader is referred to 
the papers referenced in the introduction. A thorough study of the subject can be found in \cite{lovaszBook2012}.
In this section, we follow the terminology of \cite{lovaszszegedy2006}.

Let $F$ and $G$ be two simple graphs, {\it i.e.} graphs without loops or multiple edges. Let $V(F)$ and $V(G)$ be  vertex sets of $F$ and $G$ respectively. A map $V(F)\rightarrow V(G)$ is called a {\em homomorphism} from $F$ to $G$ if it maps adjacent vertices in $F$ to adjacent vertices 
in $G$. Let $hom (F, G)$ be the number of homomorphisms of $F$ into $G$. The {\em homomorphism density} of $F$ into $G$ is defined as
 \[ 
 t(F,G)=\frac{hom(F,G)}{|V(G)|^{|V(F)|}}.
 \]
The homomorphism density can be interpreted as the probability that a random mapping $V(F) \rightarrow V(G)$ is a homomorphism. 

Let $\{G_n\}$ be a sequence of simple graphs such that $|V(G_n)| \rightarrow \infty$. We can define a notion of convergence based on homomorpism densities.

\begin{definition}
\label{def:conv}
We say that the sequence $\{G_n\}$ {\em converges} if for every simple graph $F$, the sequence $\{ t(F,G_n)\}$ converges. 
\end{definition}

This definition of convergence is non-trivial only for dense graphs, {\it i.e.}~for graph sequences $\{G_n\}$ with the property that $|E(G_n)|= \Omega(|V(G_n)|^2)$. When $\{G_n\}$ consists of sparse graphs, then for all graphs $ F$ with at least one edge, $t(F,G_n) \rightarrow 0$. 

As shown in \cite{borgsI2008},  the notion of convergence of graph sequences  is closely connected to a certain metric space described as follows: Let $\mathcal{W}_0$ denote the set of all measurable functions  $w: [0, 1]^2\rightarrow [0, 1]$ which are symmetric, {\it i.e.} $w(x,y)=w(y,x)$ for every $x,y\in[0,1]$. The elements of ${\mathcal W}_0$ are called {\em graphons}. 
We also denote by $\mathcal W$ the space of all the bounded symmetric measurable functions from $[0,1]^2$ to ${\mathbb R}$.
We can extend the definition of homomorphism densities to $\mathcal{W}$ as follows. For each function $w\in{\mathcal W}$, let

\begin{equation}
\label{wdensity}
t(F,w) = \int_{[0,1]^k} \prod_{ij \in E(F)} w(x_i,x_j)dx_1 \ldots dx_k,
\end{equation}
where $V(F)=\{1,2,\ldots,k\}$.

A simple  graph $G$, with vertex set  $V(G) = \{1, 2, \ldots , n\}$ and adjacency matrix $A$, can be represented by a function $w_G \in \cal{W}_0$, which takes values in $\{ 0,1\}$. 
Split the interval $[0, 1]$ into $n$ equal intervals $I_1, I_2 \ldots, I_n$. Now for $(x,y)  \in I_i \times I_j $, let

\begin{eqnarray}
\label{grafunct}
w_G(x,y) &=& \left\{ \begin{array}{ll}
                                                  A_{i,j} & \mbox{for} \ i \neq j \\
                                                  1        & \mbox{for} \ i = j 
                                                  \end{array} \right ..
                                                   \end{eqnarray}
Our definition of $w_G$ differs slightly from that given in \cite{lovaszszegedy2006} since we give the diagonal blocks $I_i\times I_i$ value one, not zero.  The advantage of this choice becomes apparent when we discuss  ``diagonally increasing''  functions. It is a convenience and is not essential for the results.

Note that a graph can be represented by many different functions $w_G$. Each labelling of the vertices of $G$ results in a permutation of the rows and columns of the adjacency matrix, and leads to a trivially different function. Since a graph represents an entire isomorphism class, we need to introduce an equivalent notion for functions in ${\cal W}$. 
Recall that a map $\phi:[0,1]\rightarrow [0,1]$ is {\em measure-preserving} if for every measurable set $X\subseteq [0,1]$, the pre-image $\phi^{-1}(X)$ is measurable with the same measure as $X$.
Let $\Phi$ be the set of all invertible maps $\phi:[0,1]\rightarrow[0,1]$ such that both $\phi$ and its inverse are measure-preserving. Any $\phi\in \Phi$ acts on a function $w\in {\cal W}$ by transforming it into a function $w^\phi$, where $w^{\phi}(x,y)=w(\phi(x),\phi(y))$.

The notion of the convergence of a graph sequence can be better understood if ${\mathcal W}$ is equipped with a distance derived from the {\em cut-norm}, introduced in \cite{FriezeKanan99} and defined as follows: For all $w \in \cal{W}$,

\begin{equation}
\label{cutnorm} 
\lVert w \rVert_{\square} = \underset{S,T \subset [0,1]} \sup \Bigl \lvert \int_{S \times T} w(x,y) dxdy \Bigr \rvert,
\end{equation}
 where $S$ and $T$ are measurable subsets of $[0,1]$.
We then define the {\em cut-distance} of two functions $w_1$ and $w_2$ in ${\mathcal W}$ by
\begin{equation}
\label{cutdist} 
\delta_{\Box}(w_1,w_2)=\inf_{\phi\in\Phi} \lVert w_1 - w_2^\phi \rVert_{\square} = \inf_{\phi\in\Phi}\underset{S,T \subset [0,1]} \sup \Bigl \lvert \int_{S \times T} ( w_1 - w_2^\phi ) \Bigr \rvert .
\end{equation}
This yields the definition of the cut-distance of two (unlabelled) graphs $G$ and $G'$, defined as
\begin{equation}
\label{cutdistgraph} 
\delta_{\Box}(G,G')=\delta_{\Box}(w_G,w_{G'}).
\end{equation}
The choice of term ``distance'' rather than ``metric'' is due to the fact that $\delta_\Box(G,G')$ can be zero for different graphs $G$ and $G'$, for example when $G'$ is the $k$-fold blow-up of $G$
(see \cite{borgsI2008} for more details).

It is shown in Theorem 3.8 of \cite{borgsI2008} that a graph sequence $\{ G_n\}$ converges whenever the corresponding sequence of functions $w_{G_n}$ is $\delta_\Box$-Cauchy. Moreover, to a convergent graph sequence $\{G_n\}$, one assigns a ``limit object'' represented by a function $w\in {\mathcal W}_0$  (not necessarily integer-valued, or corresponding to a graph). More precisely, for every convergent sequence $\{G_n\}$, there exists  $w$ in ${\mathcal W}_0$ such that the homomorphism densities $t(F,G_n)$ converge to the homomorphism densities $t(F,w)$ for every finite simple graph $F$. If this is the case, we say $\{G_n\}$ converges to $w$, and write $G_n\rightarrow w$.
Such a  function $w$ encodes the common structure of the graphs of the sequence. For more details, see  \cite{lovaszszegedy2006}.
In this paper, we use the following characterization of convergent graph sequences which is given in \cite{borgsI2008}.

\begin{theorem}\cite{borgsI2008}
A sequence $\{G_n\}$ converges to a function $w$ in ${\mathcal W}_0$ if and only if $\delta_\Box(w_{G_n},w)\rightarrow 0$. Furthermore, if this is the case, and $\|V(G_n)\|\rightarrow \infty$, then there is a way to label 
the vertices of the graphs $G_n$ such that $\|w_{G_n}-w\|_\Box\rightarrow 0$.
\end{theorem}

The limit object of a convergent graph sequence is unique up to measure-preserving transformations. Namely $w$ and $w'$ are limits of a convergent graph sequence $\{G_n\}$ if and only if $w^\phi=w'^\psi$ almost everywhere for some measure-preserving maps $\phi,\psi:[0,1]\rightarrow [0,1]$ (or equivalently whenever $\delta_\Box(w,w')=0$).
Note that cut-distance does not define a metric on ${\mathcal W}$, as two different functions can have $\delta_{\Box}$-distance zero. 
We say two functions $w',w\in{\mathcal W}_0$ are {\em equivalent}, and we write $w'\approx w$, if $\delta_\Box(w',w)=0$.
Identifying equivalent functions $w$ and $w'$ in ${\mathcal W}$, we consider the cut-distance as a metric on  the quotient space ${\mathcal W}/\approx$, denoted by  $\widetilde{{\mathcal W}}$. Similarly, we define the set $\widetilde{{\mathcal W}_0}$ of {\em unlabelled graphons}.
It was shown in \cite{lovasz-szegedy-GAFA2007} that  $\widetilde{{\mathcal W}_0}$ 
is in fact a compact metric space.

Finally, given any function $w\in {\mathcal W}_0$, and integer $n$, we define the random graph $G(n,w)$ to be the probability space of graphs on vertex set $\{ 1,2,\dots ,n\}$ obtained through the following stochastic process: Each vertex $j$ receives a value $x_j$, drawn independently and uniformly at random from $[0,1]$. For each pair $i<j$, independently, vertices $i$ and $j$ are then linked with conditional probability $w(x_i,x_j)$. In \cite{lovaszszegedy2006}, it is shown that, asymptotically almost surely, for any finite graph $F$, the homorphism density $t(F,G)$ for a graph $G$ produced by $G(n,w)$ is arbitrarily close to $t(F,w)$. Thus, a graph sequence $\{ G_n\}$, where for each $n$, $G_n$ is produced by $G(n,w)$, almost surely converges to $w$.

\section{Linear embeddings and the parameter $\Gamma^*$}\label{Gamma*}

In this section, we  will define a graph parameter $\Gamma^*$ which is zero precisely when the graph is a unit interval graph, or one-dimensional geometric graph, and thus has a natural linear embedding. In subsequent sections we will then introduce a related parameter $\Gamma$ which applies to functions in ${\cal W}_0$. Using graph limits, we will show a close relationship between the two parameters, especially when applied to convergent graph sequences.

First, we need precise definitions of the concepts discussed in the introduction. Following the convention, see for example \cite{JanLucRuc}, we use both {\sl random graph} and {\sl random graph model} to denote a discrete probability space where the sample space is the set of all graphs on a given vertex set. The notation $u\sim v$ signifies \lq\lq $u$ is adjacent to $v$''. The {\sl link probability} for a given pair of vertices $u,v$  is the probability of the event $u\sim v$. 

Given a convex region $S\subseteq\Rrr^k$ equipped with a metric $d$ derived from one of the $L_p$ norms, we define a symmetric function $f:S\times S\rightarrow [0,1]$ to be a {\em spatial link function} if for every $a\in[0,1]$ and for every $x\in S$, the region $R_a(x)=\{ y: f(x,y)\geq a\}$ is a convex set containing $x$. 
Thus, if we move a point $y$ away from a given point $x$ along a ray starting at $x$, then $f(x,y)$ decreases as the distance from  $x$ increases. This does not mean that $f(x,y)$ is always decreasing as the distance $d(x,y)$ is increasing, however. For example, if $S=[0,1]$, one can define a spatial link function $f$ as follows: 
\[
f(x,y)=\left\{\begin{array}{ll}1-|x-y|&\text{if }x+y\geq 1,\\ x+y-|x-y|&\text{otherwise.}\end{array}\right.
\]
Then for $f(\frac12,\frac12+\delta)=1-\delta$, and $f(\frac12,\frac12-\delta)=1-2\delta$. In both cases, the link probability decreases as $\delta$ increases, but the rate is different for values on different sides of $\frac12$.

Let $k$ be a positive integer, and  $S$ be a convex region in $\Rrr^k$. Let $d$ denote a metric derived from one of the $L_p$ norms on $S$.
Fix $n\in \Nnn$. For a  spatial link function $f$ and a probability measure $\mu$ on $S$, we define a {\sl spatial random graph} $SG(S,d,f,\mu ,n)$   to be a random graph with vertex set $\{ 1,2,\dots ,n\}$ formed according to the following process. Each vertex $j$ receives a value $x_j$, drawn from $S$ according to the probability distribution given by $\mu$.  For each pair $i<j$, independently,  vertices $i$ and $j$ are then linked with a conditional probability which equals $f(x_i,x_j)$. 

\begin{definition}
\label{spatial}
A random graph on the vertex set $\{ 1,2,\dots ,n\}$ has a {\sl spatial embedding} into a given metric space $(S,d)$ if there exist a probability distribution $\mu$ and a link probability function $f$ so that the random graph corresponds to the spatial random graph $SG(S,d,f,\mu ,n)$ ({\it i.e.}~gives the same probability distribution on the sample space of all graphs with  vertex set $\{ 1,2,\dots ,n\}$).
A {\sl linear embedding} is a spatial embedding into $(\Rrr,|\cdot |)$.
\end{definition}

The notion of spatial embedding can be seen as a \lq\lq fuzzy" version of a random geometric graph.
A graph $G$ is called a \emph{geometric graph} on a bounded region $S\subseteq \Rrr^k$ with metric $d$ if there exists an embedding $\pi$ of the vertices of $G$ in $S$, and a threshold value $r>0$, such that for every two vertices
$u$ and $v$ of $G$, $u$ is adjacent to $v$ if and only if $d(\pi(u),\pi(v))\leq r$.   Geometric graphs have been studied extensively; see for example \cite{Clark90,Dall02,Pen03}. The random geometric graph $RG(S,n,r)$ is the geometric graph which results if the embeddings of the vertices are chosen randomly from $S$.
Random geometric graphs clearly have a spatial embedding. Link probabilities in this case can only be 1 or 0. Precisely, the spatial link function $f$ is given by $f(x,y)=1$ if $d(x,y)\leq r$, and $f(x,y)=0$ otherwise. 
For all $a\in [0,1]$, $R_a(x)$ equals the closed ball around $x$ of radius $r$, so clearly $f$ is a spatial link function. 
In this paper, we restrict ourselves to geometric graphs on the one-dimensional space $(\Rrr,|\cdot |)$ and will refer to these as one-dimensional geometric graphs. 

We introduce first a graph parameter $\Gamma^*$, which characterizes geometric graphs in $(\Rrr,|\cdot |)$.  One-dimensional geometric graphs are also known as unit interval graphs. The correspondence becomes clear if we associate each vertex $u$ of a one-dimensional geometric graph with the interval $[\pi( u)-\frac{1}2,\pi(u)+\frac{1}2]$, where $\pi$ is the geometric embedding. 
(We can always assume,  without loss of generality,  that $r=1$.) Now  vertices $u$ and $v$ are adjacent precisely when the associated intervals overlap. 

It is well known that unit interval graphs are characterized by  the consecutive $1$s property of
the vertex-clique matrix (see \cite{gardi}). Restating this property, it follows that a graph $G$ is one-dimensional geometric if and only if there exists an ordering $\prec$ on the vertex set of $G$ such that
\begin{equation}\label{eq:1-dim-geom}
\forall {v,z,w\in V(G)},\, v\prec z\prec w \ \mbox{ and }\ v\sim w\Rightarrow   \ z\sim v \ \mbox{ and }\ z\sim w.
\end{equation}
%
%

To be self-contained, we present a direct proof below.

 \begin{proposition}
 \label{prop:1DiffOrder}
 A graph $G$ is a one-dimensional geometric graph (unit interval graph) if and only if there exists an ordering $\prec$ on $V(G)$ that satisfies  (\ref{eq:1-dim-geom}).
 \end{proposition}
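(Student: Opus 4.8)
The plan is to prove both directions directly from the geometric definition, exhibiting the ordering in one direction and the embedding in the other.

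For the forward direction, suppose $G$ is one-dimensional geometric with embedding $\pi:V(G)\to\Rrr$. Without loss of generality $\pi$ is injective (perturb coincident points slightly; since the adjacency condition is $|\pi(u)-\pi(v)|\le 1$, small perturbations can be chosen not to change any adjacency, or one argues that ties can be broken arbitrarily). Define $\prec$ by $u\prec v$ iff $\pi(u)<\pi(v)$. Now if $v\prec z\prec w$ and $v\sim w$, then $\pi(v)<\pi(z)<\pi(w)$ and $|\pi(v)-\pi(w)|\le 1$; hence $|\pi(v)-\pi(z)|\le|\pi(v)-\pi(w)|\le 1$ and $|\pi(z)-\pi(w)|\le|\pi(v)-\pi(w)|\le 1$, so $z\sim v$ and $z\sim w$. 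This establishes (\ref{eq:1-dim-geom}).

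For the converse, suppose $\prec$ is an ordering satisfying (\ref{eq:1-dim-geom}); write $V(G)=\{v_1\prec v_2\prec\cdots\prec v_n\}$. I want to construct $\pi:V(G)\to\Rrr$ with $v_i\sim v_j\iff|\pi(v_i)-\pi(v_j)|\le 1$. The idea is to place the vertices left to right in order, choosing the gaps small enough that all required edges are realized but large enough that all non-edges are respected. Concretely, for each $i$ let $r(i)$ be the largest index $j$ with $v_i\sim v_j$ (set $r(i)=i$ if $v_i$ has no neighbor to its right); note $r$ is non-decreasing in $i$ by (\ref{eq:1-dim-geom}), and moreover $v_i$ is adjacent to exactly $\{v_i\}\cup\{v_j: i<j\le r(i)\}$ on the right — the ``interval of neighbors'' — again because any $v_k$ with $i<k<j\le r(i)$ and $v_i\sim v_j$ forces $v_i\sim v_k$. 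Then choose the coordinates $\pi(v_1)<\pi(v_2)<\cdots<\pi(v_n)$ inductively so that for every $i$, $\pi(v_{r(i)})-\pi(v_i)\le 1$ while $\pi(v_{r(i)+1})-\pi(v_i)>1$ (when $r(i)<n$); a standard choice is to space consecutive points by $\varepsilon$'s that are powers of a small parameter, or simply to argue that the finitely many constraints ``$\pi(v_{r(i)})-\pi(v_i)\le 1$'' and ``$\pi(v_{r(i)+1})-\pi(v_i)>1$'' are mutually consistent because $r$ is monotone. One then checks that with such $\pi$, $v_i\sim v_j$ (say $i<j$) iff $j\le r(i)$ iff $\pi(v_j)-\pi(v_i)\le 1$, using monotonicity of $r$ to handle the non-edges.

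The main obstacle is the converse, specifically verifying that the constraints on the gaps can be simultaneously satisfied and that the resulting $\pi$ respects \emph{all} non-edges, not just the ``next'' one past each right-endpoint. This is handled by the monotonicity of $r$: if $v_i\not\sim v_j$ with $i<j$, then $j>r(i)\ge r(i')$ for all $i'\le i$, so in particular $\pi(v_j)-\pi(v_i)\ge\pi(v_{r(i)+1})-\pi(v_i)>1$; and symmetrically by considering left-neighbors. So once the inductive choice of coordinates is set up cleanly, the verification is routine. The slickest packaging is probably to define $\pi(v_i)$ explicitly, e.g.\ $\pi(v_i)=i-\tfrac1n\cdot(\text{number of pairs }(k,\ell)\text{ with }\ell\le i\text{ that are non-edges})$ or a similar formula engineered so that the gap between $v_i$ and $v_j$ crosses $1$ exactly at $j=r(i)$, but writing it as an inductive construction with the two families of inequalities is cleaner to verify.
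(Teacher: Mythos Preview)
Your approach is essentially the same as the paper's: the forward direction is identical, and for the converse you both place the vertices left to right in the given order, using the interval structure of the neighbourhoods (your $r(i)$, the paper's smallest neighbour index $m$). The difference is that the paper actually carries out the inductive step: having embedded $v_1,\dots,v_{n-1}$ with strict inequalities, it lets $m$ be the least index with $v_m\sim v_n$, sets $\ell=\max\{\pi(v_{n-1}),\pi(v_{m-1})+1\}$, and places $\pi(v_n)$ anywhere in $(\ell,\pi(v_m)+1)$. The key verification is that this interval is non-empty, which follows because $v_m\sim v_{n-1}$ (from the ordering property) so $\pi(v_{n-1})<\pi(v_m)+1$, and $\pi(v_{m-1})<\pi(v_m)$.

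That non-emptiness check is exactly what is missing from your proposal. You assert that the constraints $\pi(v_{r(i)})-\pi(v_i)\le 1$ and $\pi(v_{r(i)+1})-\pi(v_i)>1$ are ``mutually consistent because $r$ is monotone'', but you never show how to realise them; ``spacing by powers of a small parameter'' does not do it without further argument, and the explicit formula you float at the end is wrong (on the complete graph it gives $\pi(v_i)=i$, so $|\pi(v_1)-\pi(v_3)|=2>1$ yet $v_1\sim v_3$). The fix is precisely the paper's inductive placement, which you are close to but do not write down; once you do, your verification paragraph about non-edges goes through as stated.
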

 \begin{proof}
 The forward direction is clear. To prove the converse, we proceed by  induction. Suppose that for  every graph $G$ with $k<n$ vertices, if $V(G)$ satisfies 
 (\ref{eq:1-dim-geom}) for an ordering $\prec$, then there exists a linear embedding $\pi$ of vertices of $G$, with the additional conditions that $\pi$ is injective, and that the distance between adjacent vertices is strictly less than one. Also, we assume that the embedding respects the ordering $\prec$, so $u\prec v$ implies that $\pi(u)< \pi(v)$.
  
Suppose that $G$ is a graph with $n$ vertices, and there exists an ordering $\prec$ on vertices of $G$ which satisfies (\ref{eq:1-dim-geom}).

Let  $\{v_1,\ldots,v_n\}$ be the vertices of $G$ labeled such that $v_i\prec v_j$ whenever $i<j$. 
 The ordering $\prec$ restricted to $V(G)\setminus\{v_n\}$ satisfies Condition (\ref{eq:1-dim-geom}) for $G-v_n$. Thus, by the induction hypothesis,  $G-v_n$ 
has a linear embedding $\pi$ of $V(G)\setminus\{v_n\}$ into the real line which satisfies the additional conditions. Suppose that $m$ is the smallest index such that $v_m$  is adjacent to $v_n$. Let $\ell=\max\{\pi(v_{n-1}),\pi(v_{m-1})+1\}$, and consider the interval $(\ell,\pi(v_m)+1)$. By the induction hypothesis, $\pi(v_{m-1})<\pi( v_{m})$, and, since $v_m$ and $v_n$ are adjacent, so are $v_m$ and $v_{n-1}$, and thus $\pi(v_{n-1})<\pi(v_m)+1$. This implies that $\ell <\pi(v_m)+1$, and thus
the interval is non-empty. Moreover, every point in the interval has distance greater than  one to all embeddings of non-neighbours of $v_n$, and distance less than one to all embeddings of neighbours of $v_n$.  Therefore, choosing $\pi(v_n)$ in this interval results in a linear embedding of $V(G)$ with the desired properties, and we are done.
\end{proof}

Using Condition (\ref{eq:1-dim-geom}), we define a parameter $\Gamma^*$ on graphs which identifies the one-dimensional geometric graphs. 
Let $G$ be a graph with a linear order $\prec$ on its vertices. For every $v\in V(G)$, we define the {\em down-set}  $D(v)$ and the {\em up-set} $U(v)$  of $v$ as follows:
 $$D(v)=\{x\in V(G): x\prec v\}\ \mbox{ and }\ U(v)=\{x\in V(G):v\prec x\}.$$
 For every vertex $v$,  the collection of all the neighbours of $v$ is denoted by $N(v)$.
\begin{definition}
Let $A\subseteq V(G)$, and $\prec$ be a linear order of the vertex set of $G$. We define,
\begin{eqnarray*}
\Gamma^* (G,\prec, A)&=&\frac{1}{|V(G)|^3}\sum_{u\prec v}{\big[} 
|N(v)\cap A\cap D(u)|-|N(u)\cap A\cap D(u)|{\big]}_+\\
&+&\frac{1}{|V(G)|^3}\sum_{u\prec v}{\big[}
|N(u)\cap A\cap U(v)|-|N(v)\cap A\cap U(v)|{\big ]}_+,
\end{eqnarray*}
where 
$$[x]_+=
\left\{
\begin{array}{cc}
 x & \mbox{ if } x>0  \\
 0 & \mbox{ otherwise}     
\end{array}
\right..
$$
We also define 
\[\Gamma^*(G,\prec)=\max_{A\subseteq V(G)}\Gamma^*(G,\prec,A),\] and\[\Gamma^*(G)=\min_{\prec}\Gamma^*(G,\prec),\]where the minimum is taken over all the linear orderings of $V(G)$.
\end{definition}
\begin{proposition}\label{Gamma*-geometric}
A graph $G$ is one-dimensional geometric if and only if $\Gamma^*(G)=0$.
\end{proposition}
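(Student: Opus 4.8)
The plan is to show the equivalence via the ordering characterization from Proposition~\ref{prop:1DiffOrder}. For the forward direction, suppose $G$ is one-dimensional geometric, and let $\prec$ be an ordering satisfying condition (\ref{eq:1-dim-geom}). I claim $\Gamma^*(G,\prec,A)=0$ for every $A$, which gives $\Gamma^*(G)=0$. Fix $u\prec v$ and consider the first summand's bracketed term $|N(v)\cap A\cap D(u)|-|N(u)\cap A\cap D(u)|$. It suffices to prove $N(v)\cap D(u)\subseteq N(u)\cap D(u)$, i.e.\ that any vertex $x\prec u\prec v$ with $x\sim v$ also satisfies $x\sim u$; but this is exactly an instance of (\ref{eq:1-dim-geom}) with the triple $x\prec u\prec v$. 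Hence that bracketed quantity is $\le 0$, so $[\,\cdot\,]_+=0$. Symmetrically, for the second summand, if $x\in U(v)$ so that $u\prec v\prec x$ and $u\sim x$, then (\ref{eq:1-dim-geom}) applied to $u\prec v\prec x$ gives $v\sim x$, so $N(u)\cap U(v)\subseteq N(v)\cap U(v)$ and again the $[\,\cdot\,]_+$ term vanishes. Thus every term in both sums is zero.

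For the converse, suppose $\Gamma^*(G)=0$. Then there is an ordering $\prec$ with $\Gamma^*(G,\prec)=0$, hence $\Gamma^*(G,\prec,A)=0$ for every $A\subseteq V(G)$; in particular every bracketed term in both sums is $\le 0$. The goal is to deduce that $\prec$ satisfies (\ref{eq:1-dim-geom}), after which Proposition~\ref{prop:1DiffOrder} finishes the argument. Suppose toward a contradiction that there are $v\prec z\prec w$ with $v\sim w$ but $z\not\sim v$ (the case $z\not\sim w$ being symmetric). The idea is to choose $A$ cleverly so that some single bracketed term becomes strictly positive. Taking $u=z$ and $v=w$ in the first sum, the bracketed term is $|N(w)\cap A\cap D(z)|-|N(z)\cap A\cap D(z)|$; if we set $A=\{v\}$, then since $v\in D(z)$, $v\in N(w)$, and $v\notin N(z)$, this term equals $1-0=1>0$, so $\Gamma^*(G,\prec,\{v\})>0$, a contradiction. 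The symmetric case $z\not\sim w$ uses the second sum with the pair $u=v$, $v=z$ and $A=\{w\}$: then $w\in U(z)$, $w\in N(v)$, $w\notin N(z)$, so the bracketed term is $1>0$, again a contradiction. Hence no such bad triple exists and $\prec$ satisfies (\ref{eq:1-dim-geom}).

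The main point requiring a little care is not any single step but making sure the reduction to single-vertex sets $A$ is legitimate: since $\Gamma^*(G,\prec)$ is the \emph{maximum} over all $A$, if $\Gamma^*(G,\prec)=0$ then indeed $\Gamma^*(G,\prec,A)=0$ for \emph{every} $A$, in particular for singletons, and since all summands are nonnegative, each individual summand must vanish. Conversely in the forward direction one needs that a \emph{single} good ordering forces \emph{all} sets $A$ to give zero, which is exactly the containment of neighbourhoods argued above, uniformly in $A$. No obstacle of real depth arises; the proof is essentially an unwinding of the definitions against the consecutive-ones ordering.
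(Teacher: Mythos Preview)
Your proof is correct and follows essentially the same approach as the paper: both directions use the ordering characterization of Proposition~\ref{prop:1DiffOrder}, the forward direction establishes the neighbourhood containments $N(v)\cap D(u)\subseteq N(u)\cap D(u)$ and $N(u)\cap U(v)\subseteq N(v)\cap U(v)$ directly from condition~(\ref{eq:1-dim-geom}), and the converse uses singleton sets $A$ to force each instance of~(\ref{eq:1-dim-geom}) from the vanishing of individual summands. The only cosmetic difference is that the paper phrases the converse as a direct deduction rather than a contradiction, and uses the second sum (with pair $(u,z)$ and $A=\{v\}$) where you use the first sum (with pair $(z,w)$ and $A=\{v\}$), but these are symmetric choices.
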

\begin{proof}
Let $G$ be a one-dimensional geometric graph, and $A$ be an arbitrary subset of $V(G)$.
Let $\prec$ be a linear  ordering that satisfies Condition (\ref{eq:1-dim-geom}).  Fix an arbitrary pair of vertices  $u\prec v$ of $G$. By Condition (\ref{eq:1-dim-geom}), if  $z$ belongs to 
$N(v)\cap A\cap D(u)$  then $z$ is adjacent to $u$ as well. Thus $|N(v)\cap A\cap D(u)|\leq |N(u)\cap A\cap D(u)|$. Similarly, 
$|N(u)\cap A\cap U(v)|\leq |N(v)\cap A\cap U(v)|$, which implies that $\Gamma^*(G,\prec)=0$. Thus $\Gamma^*(G)=0$.

Conversely, let $G$ be a graph such that $\Gamma^*(G)=0$. Let $\prec$ be the linear order of $V(G)$ such that $\Gamma^*(G,\prec)=0$.  Let $u\prec v$ be an arbitrary pair of adjacent vertices of $G$, and take $z$ so that 
$u\prec z\prec v$. Since 
$\Gamma^*(G,\prec,A)=0$ for all $A\subseteq V(G)$, choosing $A=\{ v\}$ gives that 
$1=|N(u)\cap \{v\}\cap U(z)|\leq |N(z)\cap \{v\}\cap U(z)|$. This implies that $z$ is adjacent to $v$. Similarly, one can show that $z$ is adjacent to $u$. Thus Condition (\ref{eq:1-dim-geom})
is satisfied for $G$, and $G$ is a geometric graph.
\end{proof}

Next, we extend Condition (\ref{eq:1-dim-geom}) to functions in ${\cal W}_0$. The generalization is obtained by considering functions representing graphs, as introduced in the previous section.
Let $G$ be a one-dimensional geometric graph with a linear ordering $\prec$ of its vertices that satisfies Condition (\ref{eq:1-dim-geom}). Let $w_G$ be the function in ${\cal W}_0$ that represents $G$ with respect to the labelling  of $V(G)$ obtained from the linear  ordering $\prec$. 
It follows that $w_G(x,z)=1$ and $x\leq y\leq z$ imply that $w_G(x,y)=1$  and $w_G(y,z)=1$. We generalize this property as follows:
\begin{definition}
\label{def:di}
A function $w\in {\cal W}$ is \emph{diagonally increasing} if for every $x, y,z\in [0,1]$, we have
\begin{enumerate}
\item $x\leq y\leq z \Rightarrow w(x,z)\leq w(x,y),$
\item $ y\leq z \leq x \Rightarrow w(x,y)\leq w(x,z)$.
\end{enumerate}
A function $w$ in ${\cal W}$ is diagonally increasing almost everywhere if there exists a diagonally increasing function $w'$ which is equal to $w$ almost everywhere.
\end{definition}

Combining definitions \ref{spatial} and \ref{def:di}, it is clear that a symmetric function $w$ is a spatial link function on $[0,1]$ if and only if $w$ is diagonally increasing. 
In the following remark, we show that a $w$-random graph has a ``reasonable'' linear embedding whenever $w$ is equivalent to a diagonally increasing function. 

\begin{remark}
Note that the random graphs $G(n,w)$ and $G(n,w')$ are the same, {\it i.e.} they are identical as probability distributions, if $w\approx w'$. 
To see this, let $\Pr_w(F)$  denote the probability assigned to a simple graph $F$ on vertex set $\{ 1,2,\dots ,\}$ in $G(n,w)$. Clearly,
\begin{eqnarray*}
\Pr_w(F)=\int \prod_{i\sim j} w(x_i,x_j)\prod_{k\not\sim l} (1-w(x_k,x_l))=\sum_{F'} (-1)^{|e(F')|-|e(F)|} t(F',w),
\end{eqnarray*}
where the sum is taken over all graphs $F'$ on vertex set $\{1,2,\ldots, n\}$ which contain $F$ as their subgraph. 
Our claim clearly follows from 
Corollary 3.10 of \cite{borgsI2008}, which we state below:
\begin{quote}
For two graphons $w$ and $w'$ we have $\delta_\Box(w,w')=0$ if and only if $t(F,w)=t(F,w')$ for every simple graph $F$. 
\end{quote}
Thus, if $w$ is equivalent to a diagonally increasing function, then for any integer $n>1$, the random graph $G(n,w)$ has a linear embedding. 

The converse is also true, under certain conditions. Namely, suppose $G(n,w)$ has a linear embedding
$SG([0,1],|\cdot|,f,\mu,n)$. Also suppose that  $\mu$ is a continuous probability distribution ({\it i.e.} absolutely continuous with respect to Haar measure), that assigns nonzero measures to  open intervals in $[0,1]$. Let $F$ be the cumulative distribution function of $\mu$ on $[0,1]$. Then, if $x$ is sampled uniformly from $[0,1]$, $F(x)$ is sampled according to $\mu$. 
Let $w'(x,y)=f(F(x),F(y))$, where $f$ is the spatial link function. An argument similar to our previous discussion implies that for every simple graph $H$, the densities $t(H,w)$ and $t(H,w')$ are the same. Thus,
$\delta_\Box(w,w')=0$. Moreover, $w'$ is diagonally increasing, since $F$ is increasing and $f$ is a spatial link function.  
Therefore $w$ is equivalent to a  diagonally increasing function. 

Clearly, a graph is a one-dimensional geometric graph if and only if it has a function representative in ${\cal W}_0$ which is diagonally increasing. (Remember that we assume the function representative to have all blocks on the diagonal equal to 1.)  Indeed, the function representative will be the function $w_G$ where the vertices are ordered according to a linear ordering that satisfies Condition (\ref{eq:1-dim-geom}). More important is the connection between diagonally increasing functions and linear embeddings, which follows in the next section.
\end{remark}

\section{The parameter $\Gamma$ on ${\cal W}$}
\label{Gamma}
Next, we introduce a parameter $\Gamma$ which generalizes the graph parameter $\Gamma^*$ to functions in ${\cal W}$. We will see that $\Gamma$ identifies the diagonally increasing functions.

\begin{definition}
\label{def:Gamma}
Let $\cal A$ denote the collection of all measurable subsets of $[0,1]$.
Let $w$ be  a function in ${\cal W}$, and $A\in \cal A$.  We define
\begin{eqnarray*}
\Gamma (w,A)
&=&\int\int_{y<z}{\big[} \int_{x\in A\cap [0,y]} \left( w(x,z)-w(x,y) \right)dx{\big]}_+ dy dz\\
&+&\int\int_{y<z}{\big[} \int_{x\in A\cap [z,1]}\left( w(x,y)-w(x,z)\right)dx{\big ]}_+ dy dz.\\
\end{eqnarray*}
Moreover, $\Gamma (w)$ is defined as 
$$\Gamma (w)=\sup_{A\in{\cal A}}\Gamma (w,A),$$ 
where the supremum is taken over all the measurable subsets of $[0,1]$.
\end{definition}

It follows directly from the definitions that any function $w\in\cal W$ which is almost everywhere diagonally increasing has $\Gamma(w) =0$. The converse also holds, as is stated in the following proposition. 

\begin{proposition} \label{prop:Gamma0iffDI}
Let $w$ be a function in ${\cal W}$. The function $w$ is diagonally increasing almost everywhere if and only if $\Gamma (w)=0$.
\end{proposition}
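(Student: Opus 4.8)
The forward direction is immediate from the definitions: if $w$ agrees almost everywhere with a diagonally increasing $w'$, then for $y<z$ and $x\leq y$ we have $w'(x,z)\leq w'(x,y)$, so the first inner integrand $\int_{A\cap[0,y]}(w(x,z)-w(x,y))\,dx$ is $\leq 0$ for almost every pair $(y,z)$ and its positive part vanishes; symmetrically for the second term. Hence $\Gamma(w,A)=0$ for every measurable $A$, so $\Gamma(w)=0$. The substance is the converse, so I would spend essentially all the effort there.

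\textbf{Plan for the converse.} Assume $\Gamma(w)=0$. The key observation is that for a fixed pair $y<z$, the quantity $\int_{x\in A\cap[0,y]}(w(x,z)-w(x,y))\,dx$, maximized over measurable $A\subseteq[0,1]$, equals $\int_{x\in[0,y]}(w(x,z)-w(x,y))_+\,dx$, attained by taking $A=\{x\in[0,y]: w(x,z)>w(x,y)\}$ (and analogously on $[z,1]$). I would first prove the measurable-selection-type statement that $\sup_A \Gamma(w,A) = \int\int_{y<z}\big(\int_{[0,y]}(w(x,z)-w(x,y))_+\,dx + \int_{[z,1]}(w(x,y)-w(x,z))_+\,dx\big)\,dy\,dz$; this needs a little care because the optimal $A$ depends on $(y,z)$, but one can build a single $A$ that is simultaneously optimal for almost all pairs via a measurable selection / Fubini argument, or alternatively bound $\Gamma(w)$ below by integrating the two ``pointwise optimal'' choices separately. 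Either way, $\Gamma(w)=0$ forces, for almost every pair $y<z$, that $w(x,z)\leq w(x,y)$ for almost every $x\in[0,y]$, and $w(x,y)\leq w(x,z)$ for almost every $x\in[z,1]$.

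\textbf{From ``a.e. monotone'' to ``a.e.\ equal to a genuinely monotone function''.} The conclusion of the previous step is only an almost-everywhere-in-$(x,y,z)$ statement; I must upgrade it to the existence of an \emph{everywhere} diagonally increasing $w'$ with $w=w'$ a.e. The standard trick is to redefine $w$ on a null set by averaging: for each fixed second coordinate, the map $y\mapsto w(x,y)$ should be made monotone in an appropriate sense. Concretely, I would use Lebesgue density / Lebesgue differentiation to define, for each $x$ and each $t$, $\tilde w(x,t) = \lim_{h\to 0^+}\frac{1}{h}\int_t^{t+h} w(x,s)\,ds$ (one-sided limits, handling the cases $s>x$ and $s<x$ separately to respect the two-sided ``diagonally increasing'' conditions), show this limit exists for a.e.\ $x$ and all $t$, that $\tilde w = w$ a.e., and that $\tilde w$ satisfies conditions (1) and (2) of Definition~\ref{def:di} for \emph{every} triple by transferring the a.e.\ inequalities through the averaging. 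Symmetrizing $\tilde w$ (replacing it by $(x,y)\mapsto \tilde w(\min,\max)$ or averaging $\tilde w(x,y)$ and $\tilde w(y,x)$, whichever is cleaner) and possibly repeating the smoothing in the other coordinate gives the desired diagonally increasing representative $w'$.

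\textbf{Main obstacle.} The genuine difficulty is this last upgrade from ``monotone almost everywhere'' to ``equal a.e.\ to an everywhere-monotone function,'' done uniformly in the extra variable(s): one has to choose the good null sets so that they can be removed simultaneously across all slices, keep the two competing monotonicity conditions (1) and (2) of Definition~\ref{def:di} — which point in opposite directions on either side of the diagonal $x=y$ — consistent, and ensure the final function is still symmetric and still lies in $\mathcal{W}_0$ (i.e.\ values in $[0,1]$, which is automatic, and measurability, which the Lebesgue-differentiation construction guarantees). The measurable-selection step establishing the formula for $\Gamma(w)=\sup_A\Gamma(w,A)$ is the other place a technical lemma is needed, but it is routine compared to the regularization.
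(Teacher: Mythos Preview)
Your approach differs substantially from the paper's, and there is one concrete gap to flag before describing the contrast.

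\textbf{The gap.} Your ``measurable-selection-type statement'' that $\sup_A \Gamma(w,A)$ equals the double integral of the pointwise positive parts is not justified and is very likely false in general: the set $A$ that maximizes $\big[\int_{A\cap[0,y]}(w(x,z)-w(x,y))\,dx\big]_+$ for one pair $(y,z)$ will typically be forced to include points where the integrand is strictly negative for another pair, so no single $A$ is simultaneously optimal, and you cannot interchange $\sup_A$ with the outer $(y,z)$-integral. Your ``alternative'' of treating the two terms separately does not address this, since the conflict already arises within a single term. What \emph{does} work, and delivers the same intermediate conclusion, is to use not one but countably many test sets: since $\Gamma(w,I)=0$ for every interval $I$ with rational endpoints, for a.e.\ $(y,z)$ with $y<z$ one has $\int_{I\cap[0,y]}(w(x,z)-w(x,y))\,dx\leq 0$ simultaneously for all such $I$, and then Lebesgue differentiation gives $w(x,z)\leq w(x,y)$ for a.e.\ $x\in[0,y]$.

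\textbf{Comparison with the paper.} The paper takes a completely different route and avoids your second (regularization) step altogether. From $\Gamma(w)=0$ it first extracts, for any ordered triples of sets $C\leq A\leq B$, the averaged inequality $\frac{1}{\mu(B)}\int_{C\times B}w\leq\frac{1}{\mu(A)}\int_{C\times A}w$ (and its mirror). Applying this to the blocks $I_i=[\,i/n,(i+1)/n\,]$, the paper sandwiches $w$ almost everywhere between two step functions $w_n^-\leq w\leq w_n^+$ with $\|w_n^+-w_n^-\|_1\leq 8/n$; hence $\|w-w_n\|_1\to 0$, and Borel--Cantelli gives $w_{2^n}\to w$ a.e. Each block-averaged $w_n$ is \emph{genuinely} diagonally increasing (this follows directly from the averaged inequalities on the $I_i$), so $\psi:=\limsup_n w_{2^n}$ is as well, and $\psi$ is the desired representative. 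This discretize-and-take-limits construction neatly sidesteps the delicate issues you correctly identify in your upgrade step---the uniform control of null sets across slices, the two monotonicity conditions on opposite sides of the diagonal, and the preservation of symmetry---because each $w_n$ automatically inherits symmetry and the diagonally increasing property from $w$. Your Lebesgue-point approach is plausible but would require real work to nail down; the paper's approximation argument is cleaner and shorter.
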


Before we give the proof, we introduce some notations which will be used later.
%
Let $w\in {\cal W}_0$, and $A$ and $B$ be measurable subsets of $[0,1]$. We define $\tilde{w}(A,B)$ to be the average of $w$ on $A\times B$, {\it i.e.}
$$\tilde{w}(A,B)=\frac{1}{\mu(A) \mu(B)}\int_{A\times B}w(x,y)dxdy,$$
where $\mu$ is the Lebesgue measure on $[0,1]$.
 Let $n$ be a positive integer. For each $0\leq i\leq n-1$, let $I_i=[\frac{i}{n},\frac{i+1}{n}]$. We define the symmetric functions $w_n$, $w_n^+$, and $w_n^-$ on $[0,1]^2$ as follows.
 \begin{eqnarray*}
w^n_{i,j}&=& \tilde{w}(I_i,I_j) \mbox{ for } 0\leq i,j\leq n-1,\\
w_n(x,y)&=&w^n_{i,j} \mbox{ if } (x,y)\in I_i\times I_j,\\
 w_n^-(x,y)&=&\left\{
\begin{array}{cc}
 w^n_{i-1,j+1} &   \mbox{ if } (x,y)\in I_i\times I_j\ \&\ 1\leq  i\leq j\leq n-2   \\
 0  & \mbox{ if } (x,y)\in I_0\times I_j   \\
 0  & \mbox{ if } (x,y)\in I_i\times I_{n-1},   \\
\end{array}\right.\\
 w_n^+(x,y)&=&\left\{
\begin{array}{cc}
w^n_{i+1,j-1} &   \mbox{ if } (x,y)\in I_i\times I_j\ \&\  i\leq j-2   \\
1 & \mbox{ if } (x,y)\in I_i\times I_i   \\
1  & \mbox{ if } (x,y)\in I_i\times I_{i+1} .  \\
\end{array}\right.
 \end{eqnarray*}
Let $A$ and $B$ be subsets of $[0,1]$. We say $A\leq B$ if every $a$ in $A$ is smaller than or equal to every $b$ in $B$.

We now give the proof of Proposition \ref{prop:Gamma0iffDI}.
This proof is inspired by the proof of Lemma 4.6 of \cite{bollobas2011}. However, we include the proof to make the paper self-contained.
\begin{proof}[Proof of Proposition \ref{prop:Gamma0iffDI}]
Clearly, if $w$ is diagonally increasing almost everywhere then $\Gamma (w)=0$.  
We now prove  the other direction. First, let us assume that $w$ is a function in ${\cal W}_0$ with $\Gamma (w)=0$. 
Let $A$, $B$, and $C$ be measurable  subsets of $[0,1]$ such that $C\leq A\leq B$. Since $\Gamma (w)=0$, 
for almost every $y\in A$ and almost every $z\in B$, 
\begin{equation}\label{eq1}
\int_{x\in C} w(x,z) dx\leq  \int_{x\in C}w(x,y)dx.
\end{equation}
Taking repeated integrals of both sides of Equation (\ref{eq1}) over $A$ and then  $B$ and then dividing by $\mu(A)$, we conclude that
\begin{equation}\label{eq2}
\int_{C\times B} w(x,z) dxdz\leq  \frac{\mu(B)}{\mu(A)}\int_{C\times A}w(x,y)dxdy.
\end{equation}
 %
 Similarly, one can show that for subsets $A$, $B$, and $C$ of $[0,1]$ with $A\leq B\leq C$, we have
 \begin{equation}\label{eq3}
 \int_{A\times C} w(x,y) dydx\leq  \frac{\mu(A)}{\mu(B)}\int_{B\times C}w(x,z)dzdx.
 \end{equation}
 Applying the above inequalities to the sets $I_i$, we have that for every $(x,y)\in [0,1]^2$,  $w_n^-(x,y)\leq w_n(x,y)\leq w_n^+(x,y)$.  
Now let  $A$ and $B$ be  measurable subsets of $[0,1]$. From Equations (\ref{eq2}) and (\ref{eq3}) it follows that, if $0\leq i\leq j-2\leq n-3$, then
\begin{eqnarray*}
\int_{(A\cap I_i)\times (B\cap I_j)} w(x,y)dxdy &\leq & \frac{\mu(B\cap I_j)}{\mu(I_{j-1})}
\int_{(A\cap I_i)\times I_{j-1}} w(x,y)dxdy\\
&\leq & \frac{\mu(A\cap I_i)\mu(B\cap I_j)}{\mu(I_{i+1})\mu(I_{j-1})}\int_{I_{i+1}\times I_{j-1}}w(x,y)dxdy\\
&=& \mu(A\cap I_i)\mu(B\cap I_j) w^n_{i+1,j-1}.
\end{eqnarray*}
Thus,
\begin{eqnarray}\label{eq:4.2}
\int_{(A\cap I_i)\times (B\cap I_j)} w(x,y)dxdy\leq \int_{(A\cap I_i)\times (B\cap I_j)} w_n^+(x,y)dxdy.
\end{eqnarray}
By definition of $w_n^+$, similar inequalities hold trivially for the cases where $i=j-1$ or $i=j$. 
Finally, using the fact that $w$ is symmetric, we conclude that (\ref{eq:4.2}) holds for every $i$ and $j$. Therefore,
 \begin{eqnarray*}
\int_{A\times B}w(x,y)dxdy
&=&\sum_{i,j=0}^{n-1}\int_{(A\cap I_i)\times (B\cap I_j)} w(x,y)dxdy\\
&\leq& \sum_{i,j=0}^{n-1}\int_{(A\cap I_i)\times (B\cap I_j)} w^+_n(x,y)dxdy\\
 &=&\int_{A\times B}w_n^+(x,y)dxdy.
 \end{eqnarray*}
 Moreover, since measurable subsets of $[0,1]^2$ can be approximated in measure by finite unions of disjoint rectangles, we get
 $$\int_{E} w(x,y) dxdy\leq  \int_{E}w^+_n(x,y)dxdy,$$
 for every measurable subset $E$ of $[0,1]^2$. Thus, $w\leq w_n^+$ (and similarly $w_n^-\leq w$) almost everywhere in $[0,1]^2$.
Therefore, 
$$\|w-w_n\|_1\leq \|w_n^+-w_n^-\|_1=\int_{[0,1]^2}(w_n^+-w_n^-)(x,y)dxdy.$$
By the definitions of $w_n^+$ and $w_n^-$, we have $\int_{I_i\times I_j} w_n^-(x,y) dxdy= \int_{I_{i-2}\times I_{j+2}}w^+_n(x,y)dxdy$ for every pair $i,j$ satisfying $2\leq i\leq j-1\leq\ n-4$. Moreover, 
$w_n^+,w_n^-\in{\mathcal W}_0$. Thus, 
$$\|w-w_n\|_1\leq \frac{8}{n}.$$
Using the Borel-Cantelli lemma, we conclude that the sequence $\{w_{2^n}\}_{n\in {\mathbb N}}$ converges to $w$ almost everywhere in $[0,1]^2$, {\it i.e.}
$\psi:=\limsup_{n\in {\mathbb N}}w_{2^n}=w \ \mbox{ almost everywhere}.$
Finally, by Equations (\ref{eq2}) and (\ref{eq3}), each $w_n$ is a diagonally increasing function. Therefore, $\psi$ is diagonally increasing as well. This proves the converse for the case where $w\in{\mathcal W}_0$. 

Now let $w$ be an element of ${\mathcal W}$ such that $\Gamma(w)=0$. Define the new symmetric function $w'$ to be $w'=\frac{w-a}{b-a}$, where $a$ (respectively $b$) is a lower bound 
(respectively upper bound) for $w$. Then $w'\in{\mathcal W}_0$ and $\Gamma(w')=0$. Therefore, by the previous part of the proof, we have that $w'$ is diagonally increasing almost everywhere. Hence, $w$ is diagonally increasing almost everywhere as well. 
\end{proof}

\section{Parameters $\Gamma^*$ and $\Gamma$ asymptotically agree on graphs}\label{relation}

A graph $G$ can be represented as a function $w_G\in {\cal W_0}$, but it is not necessarily true that $\Gamma^*(G)=\Gamma ( w_G)$, even when the representation $w_G$ is obtained by using the ordering of the vertices that achieves $\Gamma^*(G)$. This is due to the fact that a set $A$ which determines the value of $\Gamma (w)$ does not have to be consistent with the partition of $[0,1]$ into $n$ equal-sized parts on which $w_G$ is defined. However,  we show  that $\Gamma^* (G)$ and $\Gamma (w_G)$, computed using the same ordering of the vertices, are asymptotically equal. This result follows as a corollary from the following theorem.

%
%
%


\begin{theorem}\label{thm:approximation-of-gamma}
Let $n\in{\mathbb N}$. Let $w\in {\cal W}_0$ be a function which is measurable with respect to the product algebra  $\cal {A}_n^*\times \cal {A}_n^*$, where the algebra  $\cal {A}_n^*$ is  generated by the intervals
$\{I_i: 0\leq i\leq n-1\}$. Then
$$\Gamma (w)=\sup_{A\in \cal A}\Gamma (w,A)=\max_{ A\in \cal A_n^*}\Gamma (w,A)+\cal{O}(\frac{1}{n}).$$

\end{theorem}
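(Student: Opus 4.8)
The plan is to show that $\Gamma(w)$, which is an a priori supremum over \emph{all} measurable sets $A \in \mathcal{A}$, is nearly attained by a set in the finite algebra $\mathcal{A}_n^*$, with an error of order $c/n$. The key structural observation is that $w$ is constant on each block $I_i \times I_j$, so the integrand defining $\Gamma(w,A)$ interacts with $A$ only through the quantities $\mu(A \cap I_i)$ for $0 \le i \le n-1$. First I would fix an arbitrary $A \in \mathcal{A}$ and write $\alpha_i := n\,\mu(A \cap I_i) \in [0,1]$; then the inner integrals $\int_{x \in A \cap [0,y]}(w(x,z)-w(x,y))\,dx$ become, up to the fractional contribution from the single block containing $y$, a sum $\frac{1}{n}\sum_{i : I_i \subseteq [0,y]} \alpha_i (w^n_{i,\cdot} - w^n_{i,\cdot})$ evaluated at the block-indices of $y$ and $z$. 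Carrying the $y,z$ integration out in the same blockwise fashion, one sees that $\Gamma(w,A)$ depends on $A$ only through the vector $(\alpha_0,\dots,\alpha_{n-1}) \in [0,1]^n$, modulo a boundary error: the blocks $I_i$ that contain the cut-point $y$ or $z$ contribute an integrand bounded by $2c$ over a region of measure $O(1/n)$, hence an error $O(c/n)$ in total.

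The next step is to show that, as a function of $(\alpha_0,\dots,\alpha_{n-1}) \in [0,1]^n$, the (boundary-free) expression is convex — indeed it is a sum of terms of the form $[\text{linear in }\alpha]_+$ integrated against nonnegative weights, and $[\,\cdot\,]_+$ is convex and nondecreasing. A convex function on the cube $[0,1]^n$ attains its maximum at a vertex, i.e.\ at a $0/1$ vector $(\alpha_0,\dots,\alpha_{n-1})$. But a $0/1$ choice of the $\alpha_i$ corresponds exactly to taking $A$ to be a union of full blocks $I_i$, that is, $A \in \mathcal{A}_n^*$ (up to a null set). Therefore $\sup_{A \in \mathcal{A}} \Gamma(w,A)$ equals $\max_{A \in \mathcal{A}_n^*}$ of the boundary-free expression, and reinstating the $O(c/n)$ boundary error on both sides gives the claimed identity. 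The fact that the max over $\mathcal{A}_n^*$ is genuinely a maximum (not just a supremum) is immediate since $\mathcal{A}_n^*$ is finite.

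I would organize the write-up as: (i) reduce $\Gamma(w,A)$ to a clean blockwise sum plus an explicitly bounded remainder, isolating the terms where $y$ or $z$ lies in a block partially covered by the relevant interval of integration; (ii) observe that the clean sum is a function $F(\alpha_0,\dots,\alpha_{n-1})$ that is convex on $[0,1]^n$ because it is built from $[\cdot]_+$ of affine functions with nonnegative coefficients; (iii) invoke the maximum principle for convex functions on a polytope to push the optimum to a vertex, identify vertices with block-unions, and conclude.

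The main obstacle I anticipate is purely bookkeeping: tracking the boundary terms precisely when $y$ (or $z$) falls in the block $I_i$ that straddles the endpoint of the integration interval $[0,y]$ or $[z,1]$, and checking that these sum to $O(c/n)$ uniformly in $A$. One must be a little careful that the "straddling block" for the $x$-integration and the blocks containing $y$ and $z$ are handled consistently, and that replacing $A \cap I_i$ by either $\emptyset$ or $I_i$ changes each of the two double integrals in Definition~\ref{def:Gamma} by at most $O(c/n)$ — but since the integrand is bounded by $c$ and the symmetric difference lives in a set of measure $O(1/n)$ after the vertex rounding is applied one coordinate at a time, this is manageable. Everything else (the convexity argument and the vertex principle) is standard.
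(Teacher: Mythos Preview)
Your proposal is correct and follows essentially the same argument as the paper: reduce $\Gamma(w,A)$ to a clean blockwise function $g_w(\beta_0,\dots,\beta_{n-1})$ of the measures $\beta_k=\mu(A\cap I_k)$ plus an $O(c/n)$ boundary term (using $[x+y]_+\le [x]_++[y]_+$ and its reverse), observe that $g_w$ is convex as a nonnegative combination of $[\,\cdot\,]_+$ applied to linear forms, and push the maximum to a vertex of the cube, which corresponds to $A\in\mathcal A_n^*$. The only cosmetic difference is your normalization $\alpha_i=n\beta_i$; the paper works directly with $\beta_k\in[0,1/n]$ and obtains the explicit constant $4c/n$.
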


\begin{proof}
Let $n\in\mathbb{N}$ and $w\in {\cal W}_0$ be as above. Note that  $w$ is constant on the rectangles $I_i\times I_j$, since it is measurable with respect to the product algebra $\cal {A}_n^*\times \cal {A}_n^*$.
For each $i,j\in\{0,\ldots,n-1\}$, let $w(x,y)=a_{ij}$ whenever $(x,y)\in I_i\times I_j$.
 Fix $A\in \cal A$, and let
$\beta_k=\mu(A\cap I_k)$ for every $0\leq k\leq n-1$. The expression for $\Gamma(w,A)$ as given in Definition \ref{def:Gamma} can now be simplified.

Consider $y<z$ so that $y\in I_i$ and $z\in I_j$. If $i=j$, then for all $x$, $w(x,z)=w(x,y)$, so $\big[ \int_{x\in A\cap [0,y]}\left(w(x,z)-w(x,y) \right)dx\big]_+=0$. If $0\leq i<j\leq n-1$, then

\begin{eqnarray*}
&&\big[\int_{x\in A\cap [0,y]} \left(w(x,z)-w(x,y) \right)dx\big]_+\\
&=&\big[\sum_{k=0}^{i-1}{\int_{A\cap I_k}}(a_{kj}-a_{ki})dx+\int_{A\cap I_i\cap [0,y]}(a_{ij}-a_{ii})dx\big]_+
\\
&=&\big[\sum_{k=0}^{i-1}\mu(A\cap I_k)(a_{kj}-a_{ki})+\mu(A\cap I_i\cap[0,y])(a_{ij}-a_{ii})\big]_+
\\
&\leq &\left(\big{[}\sum_{k=0}^{i-1}\beta_k(a_{kj}-a_{ki})\big{]}_+ +\frac{2}{n}\right).
\end{eqnarray*}
In the last step, we use the inequality $[x+y]_+\leq [x]_++[y]_+$, and the fact that  $w$ is bounded by $1$, so $|\mu(A\cap I_i\cap[0,y])(a_{ij}-a_{ii})|$ is at most $\frac{2}{n}$. 

Similarly, we have that
\begin{eqnarray*}
&& {\big[} \int_{x\in A\cap [z,1]}\left(w(x,y)-w(x,z)\right)dx{\big ]}_+
\\
&=&\big[ \sum_{k=j+1}^{n-1}\mu(A\cap I_k)(a_{ki}-a_{kj})+\mu(A\cap I_j\cap[z,1])(a_{ji}-a_{jj}){\big]}_+
\\
&\leq & \left( {\big[}\sum_{k=j+1}^{n-1}\beta_k(a_{ki}-a_{kj}) {\big]}_+ +\frac{2}{n}\right).
\end{eqnarray*}

Using this, we can bound $\Gamma(w,A)$:

\begin{eqnarray*}
\Gamma  (w,A) &\leq &
\sum_{0\leq i<j\leq n-1}\int_{y\in I_i}\int_{z\in I_j}\left(\big{[}\sum_{k=0}^{i-1}\beta_k(a_{kj}-a_{ki})\big{]}_++\frac{2}{n}\right)dy dz
\\
&+&\sum_{0\leq i<j\leq n-1}\int_{y\in I_i}\int_{z\in I_j}\left(\big{[}\sum_{k=j+1}^{n-1}\beta_k(a_{ki}-a_{kj})\big{]}_++\frac{2}{n}\right)dy dz
\\
&=&\sum_{0\leq i<j\leq n-1}\frac{1}{n^2}\big{[}\sum_{k=0}^{i-1}\beta_k(a_{kj}-a_{ki})\big{]}_++\frac{n-1}{n^2}
\\
&+&\sum_{0\leq i<j\leq n-1}\frac{1}{n^2}\big{[}\sum_{k=j+1}^{n-1}\beta_k(a_{ki}-a_{kj})\big{]}_++\frac{n-1}{n^2}.\\
\end{eqnarray*}

Now define,
\begin{eqnarray*}
g_w(A)&=&g_w(\beta_0,\ldots,\beta_{n-1})\\
&=& \sum_{0\leq i<j\leq n-1}\frac{1}{n^2}\left(
\big{[}\sum_{k=0}^{i-1}\beta_k(a_{kj}-a_{ki})\big{]}_+ 
+\big[ \sum_{k=j+1}^{n-1}\beta_k(a_{ki}-a_{kj})\big{]}_+\right).
\end{eqnarray*}
%
Thus,
%
\begin{equation}\label{eq:upper-bound}
\Gamma  (w,A)\leq g_w(A)+\frac{2(n-1)}{n^2}\leq g_w(A)+\frac{2}{n}.
\end{equation}
Similarly, one can use the inequality $[x+y]_+\geq [x]_+-|y|$
 to show that 
\begin{equation}\label{eq:lower-bound}
\Gamma  (w,A)\geq g_w(A)-\frac{2}{n}.
\end{equation}
Since $x\mapsto [x]_+$ is a convex function, $g_w$ is the sum of convex functions, and therefore is itself also convex. Moreover, since $\beta_k\in [0,\frac{1}{n}]$, the function $g_w$ achieves its maximum when each of the coefficients $\beta_k$ is either $0$ or $\frac{1}{n}$. Since $\beta_k=\mu(A\cap I_k)$, this implies that the maximum is achieved when, for each $k$, either $A$ contains $I_k$, or is disjoint from $I_k$.
Hence, $\sup_{A\in\cal A}g_w(A)=\max_{A\in \cal A_n^*}g_w(A)$. 
%
%

Let $A'\in\cal A_n^*$ be such that  $\max_{A\in \cal A_n^*}g_w(A)=g_w(A')$. Then, by Equation (\ref{eq:upper-bound}) and (\ref{eq:lower-bound}) we have,
\begin{eqnarray}\label{eq:1}
\underset{A\in \cal A}\sup \Gamma  (w,A)&\leq&\underset{A\in\cal A}\sup g_w(A)+\frac{2}{n}
=\max_{A\in \cal A_n^*} g_w(A)+\frac{2}{n}\nonumber\\
&=&g_w(A')+\frac{2}{n}
\leq\Gamma  (w,A')+\frac{4}{n}\nonumber\\
&\leq&\max_{A\in\cal A^*}\Gamma  (w,A)+\frac{4}{n}.
\end{eqnarray}
On the other hand, we clearly have
\begin{equation}\label{eq:2}
\max_{A\in\cal A_n^*}\Gamma  (w,A)\leq\underset{A\in\cal A}\sup\Gamma  (w,A),
\end{equation}
completing the proof. 
\end{proof}

\begin{corollary}\label{cor:gamma-graph-function}
Let $G$ be a graph with $n$ vertices, and $w_G$ be the function in ${\cal W}_0$ that represents $G$ with respect to a linear ordering $\prec$ of the vertices of $G$. Then
$$\Gamma^*(G,\prec)=\Gamma (w_G)+\cal O(\frac{1}{n}).$$
\end{corollary}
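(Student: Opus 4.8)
The plan is to derive Corollary~\ref{cor:gamma-graph-function} directly from Theorem~\ref{thm:approximation-of-gamma} by checking that the graphon $w_G$ satisfies the hypotheses of that theorem, and then matching the discrete maximum $\max_{A\in\cal A_n^*}\Gamma(w_G,A)$ with the combinatorial quantity $\Gamma^*(G,\prec)$. First I would observe that, since $w_G$ is defined to be constant on each block $I_i\times I_j$ (with value $A_{ij}\in\{0,1\}$ off the diagonal and $1$ on the diagonal), it is measurable with respect to $\cal A_n^*\times\cal A_n^*$ and bounded by $c=1$. Theorem~\ref{thm:approximation-of-gamma} then immediately gives
\[
\Gamma(w_G)=\max_{A\in\cal A_n^*}\Gamma(w_G,A)+\cal O\!\left(\tfrac1n\right).
\]
So it remains to show $\max_{A\in\cal A_n^*}\Gamma(w_G,A)=\Gamma^*(G,\prec)+\cal O(1/n)$.

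The second and main step is the translation between the continuous and discrete formulas. For $A\in\cal A_n^*$, write $A=\bigcup_{k\in S}I_k$ for some $S\subseteq\{0,\dots,n-1\}$, and let $A_G=\{v_{k+1}:k\in S\}$ be the corresponding vertex subset (under the labelling induced by $\prec$). I would compute $\Gamma(w_G,A)$ by evaluating the two iterated integrals in Definition~\ref{def:Gamma} block by block, exactly as in the proof of Theorem~\ref{thm:approximation-of-gamma} but now with $\beta_k\in\{0,1/n\}$ so the boundary (within-block) terms are genuinely $\cal O(1/n)$ in total rather than merely bounded. For a pair $y\in I_i$, $z\in I_j$ with $i<j$, one gets
\[
\Big[\int_{x\in A\cap[0,y]}(w_G(x,z)-w_G(x,y))\,dx\Big]_+ = \tfrac1n\Big[\sum_{k\in S,\,k<i}(A_{kj}-A_{ki})\Big]_+ + \cal O\!\left(\tfrac1n\right),
\]
and the inner sum equals $|N(v_{j+1})\cap A_G\cap D(v_{i+1})| - |N(v_{i+1})\cap A_G\cap D(v_{i+1})|$ up to $\cal O(1)$ corrections coming from whether $v_{i+1}$ or $v_{j+1}$ itself lies in $A_G\cap D(\cdot)$ (which it cannot, since $D(v_{i+1})$ excludes $v_{i+1}$ and $v_{j+1}\succ v_{i+1}$). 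Integrating over $y\in I_i$, $z\in I_j$ contributes a factor $1/n^2$; summing over $0\le i<j\le n-1$ and recognising the sum as $\frac{1}{n^3}\sum_{u\prec v}[\cdots]_+$ reproduces the first sum in $\Gamma^*(G,\prec,A_G)$. The symmetric computation with $A\cap[z,1]$ and up-sets gives the second sum. Taking the max over $A$ (equivalently over $S$, equivalently over vertex subsets $A_G$) then yields $\max_{A\in\cal A_n^*}\Gamma(w_G,A)=\Gamma^*(G,\prec)+\cal O(1/n)$, and combining with the display from Theorem~\ref{thm:approximation-of-gamma} finishes the proof. Proposition~\ref{prop:gamma-graph-function} is then the special case $\prec$ optimal.

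The main obstacle is bookkeeping the $\cal O(1/n)$ error terms carefully: one must confirm that the discrepancies between the continuous block-sums and the exact cardinalities $|N(v)\cap A\cap D(u)|$ (arising from partial blocks $A\cap I_i\cap[0,y]$, from the diagonal convention $w_G(x,x)=1$, and from the off-by-one between block indices $0,\dots,n-1$ and vertices $v_1,\dots,v_n$) are each bounded by a constant per pair $(i,j)$, so that after multiplication by $1/n^2$ and summation over $\cal O(n^2)$ pairs the total error is $\cal O(1/n)$ uniformly in $A$. Since the uniform bound is what lets the $\max$ pass through the error term, this is the step that needs the most care; everything else is a direct specialisation of the already-proved Theorem~\ref{thm:approximation-of-gamma}.
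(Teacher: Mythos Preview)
Your proposal is correct and follows essentially the same route as the paper: apply Theorem~\ref{thm:approximation-of-gamma} to reduce to $\max_{A\in\cal A_n^*}\Gamma(w_G,A)$, then identify this with $\Gamma^*(G,\prec)$ up to $\cal O(1/n)$ via the block-by-block computation. The paper streamlines the second step by quoting the auxiliary function $g_w$ and the two-sided bound $|g_w(A)-\Gamma(w,A)|\le 2/n$ already established inside the proof of Theorem~\ref{thm:approximation-of-gamma}, and then observing the exact equality $\Gamma^*(G,\prec,\tilde A)=g_{w_G}(A)$ (your remark that the ``$\cal O(1)$ corrections'' vanish is precisely this); you instead re-derive these bounds, but the content is identical.
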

\begin{proof}
Let $A\in\cal A_n^*$, and define $\tilde{A}=\{0\leq i\leq n-1: I_i\subseteq A\}$. From the proof of Theorem \ref{thm:approximation-of-gamma}, it is easy to observe that $\Gamma^*(G,\prec, \tilde{A})=g_{w_G}(A)$, and
$g_{w_G}(A)-\frac{2}{n}\leq \Gamma (w_G,A)\leq g_{w_G}(A)+\frac{2}{n}.$
Thus,
$$\max_{A\in \cal A_n^*}\Gamma^*(G,\prec,\tilde{A})-\frac{2}{n}\leq \max_{A\in \cal A_n^*}\Gamma (w_G,A)\leq \max_{A\in \cal A_n^*}\Gamma^*(G,\prec,\tilde{A})+\frac{2}{n}.$$
Using Theorem \ref{thm:approximation-of-gamma}, we conclude that $|\Gamma^*(G,\prec)-\Gamma (w_G)|\leq \frac{6}{n}$, and we are done. 
\end{proof}

\section{Continuity of the parameter $\tilde\Gamma$.}\label{continuity}

Our main result, presented in this section, concerns the behaviour of  the parameter $\Gamma^*$ if applied to a converging 
graph sequence $\{ G_n\}$. Using the theory developed in the previous sections, we will show that the sequence $\{ \Gamma^*(G_n)\}$ converges.
Precisely, suppose $\{ G_n\}$ converges to a limit $w\in {\cal W}_0$. Then  there exists a function $w^*$ arbitrarily close to $w$ under the box distance, so that $\lim_{n\rightarrow \infty}\Gamma^*(G_n)$ is arbitrarily close to $\Gamma (w^*)$.

The above follows from the continuity of a related parameter, $\tilde{\Gamma}$, which is defined on ${\cal W}_0$ as the infimum of $\Gamma (w)$ over a set of functions that have box distance zero to each other. The precise definition is given below in Definition \ref{def:tilde}. We first present the following lemmas.


\begin{lemma}\label{lem:w-chi-cut}
Let $w:[0,1]^2\rightarrow[-2,2]$ be a measurable function. Then $\|w\chi\|_\Box\leq 2\sqrt{\|w\|_\Box}$, where 
$$\chi(x,y)=\left\{
\begin{array}{cc}
1  &  x\leq y  \\
 0 &  \mbox{otherwise}
\end{array}
\right ..$$
\end{lemma}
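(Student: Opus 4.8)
The plan is to prove Lemma~\ref{lem:w-chi-cut} by reducing the estimate of $\|w\chi\|_\Box$ to the cut norm of $w$ itself, exploiting the fact that the \emph{order-ideal} region $\{x\le y\}$ can be built, up to small error, from a moderate number of dyadic rectangles on which $\chi$ is constant. Concretely, I would fix measurable sets $S,T\subseteq[0,1]$ realizing (nearly) the supremum in $\|w\chi\|_\Box$, and estimate
$$\Bigl|\int_{S\times T}w(x,y)\chi(x,y)\,dxdy\Bigr| = \Bigl|\int_{(S\times T)\cap\{x\le y\}}w(x,y)\,dxdy\Bigr|.$$
The idea is to approximate the triangular domain $(S\times T)\cap\{x\le y\}$ by a union of product sets: partition $[0,1]$ into $N$ equal intervals $J_1,\dots,J_N$, and write $(S\times T)\cap\{x\le y\}$ as the disjoint union over pairs $p<q$ of $(S\cap J_p)\times(T\cap J_q)$ (where $\chi\equiv 1$), plus the diagonal pieces $(S\cap J_p)\times(T\cap J_p)\cap\{x\le y\}$ which have total measure at most $N\cdot (1/N)^2 = 1/N$. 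On each full product piece, $|\int_{(S\cap J_p)\times(T\cap J_q)}w|\le\|w\|_\Box$, and on each diagonal piece the integrand is bounded by $2$ in absolute value.

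Carrying this out, one gets
$$\Bigl|\int_{(S\times T)\cap\{x\le y\}}w\Bigr| \;\le\; \binom{N}{2}\|w\|_\Box + 2\cdot\frac1N \;\le\; \frac{N^2}{2}\|w\|_\Box + \frac2N.$$
This holds for every positive integer $N$, so it remains to choose $N$ to balance the two terms. Taking $N$ of order $\|w\|_\Box^{-1/2}$ — precisely, let $N=\lceil \|w\|_\Box^{-1/2}\rceil$, which is a legitimate integer $\ge 1$ because $\|w\|_\Box\le\frac12<1$ — makes each term $O(\sqrt{\|w\|_\Box})$, and a short computation (keeping careful track of the ceiling, using $N\le 2\|w\|_\Box^{-1/2}$ and $N\ge\|w\|_\Box^{-1/2}$, which is where the hypotheses $\|w\|_\Box\le\frac12$ and the bound $[-2,2]$ on the range enter) yields $\|w\chi\|_\Box\le 2\sqrt{\|w\|_\Box}$. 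Since $S,T$ were chosen to approach the supremum defining $\|w\chi\|_\Box$, taking the supremum over $S,T$ finishes the proof.

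The main obstacle I anticipate is purely bookkeeping: getting the constant to come out as exactly $2$ rather than some larger universal constant. The two-term bound $\tfrac{N^2}{2}\|w\|_\Box+\tfrac2N$ optimized over real $N$ gives roughly $\tfrac{3}{2^{2/3}}\|w\|_\Box^{2/3}\cdot(\dots)$ — no, more carefully, setting the derivative to zero gives $N^3 = 2/\|w\|_\Box$, hence value $\approx \tfrac{3}{2}(2)^{1/3}\|w\|_\Box^{2/3}$, which is \emph{better} than $\sqrt{\|w\|_\Box}$ when $\|w\|_\Box$ is small; but since we need an integer $N$ and a clean statement, rounding to $N=\lceil\|w\|_\Box^{-1/2}\rceil$ and tolerating the weaker exponent $1/2$ is the simplest route, and one must just verify the arithmetic $\tfrac{N^2}{2}\|w\|_\Box+\tfrac2N\le 2\sqrt{\|w\|_\Box}$ for that choice. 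A secondary point to handle with care is the reduction from general measurable $S,T$ to the dyadic decomposition: one should either work with an approximating sequence of finite unions of rectangles (as invoked in the proof of Proposition~\ref{prop:Gamma0iffDI}) or simply observe that intersecting $S$ and $T$ with the fixed intervals $J_p$ is legitimate for arbitrary measurable sets, so no approximation is actually needed there.
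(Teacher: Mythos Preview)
Your approach has a genuine gap: the decomposition into $\binom{N}{2}$ product pieces is too coarse to yield the exponent $1/2$. With your choice $N=\lceil \|w\|_\Box^{-1/2}\rceil$, the rectangle term satisfies
\[
\frac{N^2}{2}\,\|w\|_\Box \;\ge\; \frac{(\|w\|_\Box^{-1/2})^2}{2}\,\|w\|_\Box \;=\;\frac12,
\]
so it does \emph{not} tend to zero as $\|w\|_\Box\to 0$, and certainly is not $O(\sqrt{\|w\|_\Box})$. Your optimization over real $N$ also contains an arithmetic slip: with $N^3=2/\|w\|_\Box$ one gets
\[
\frac{N^2}{2}\|w\|_\Box+\frac{2}{N}=2^{-1/3}\|w\|_\Box^{1/3}+2^{2/3}\|w\|_\Box^{1/3}=3\cdot 2^{-1/3}\|w\|_\Box^{1/3},
\]
i.e.\ $O(\|w\|_\Box^{1/3})$, which is \emph{worse} than $\sqrt{\|w\|_\Box}$ for small $\|w\|_\Box$ (since $\epsilon^{1/3}>\epsilon^{1/2}$ for $0<\epsilon<1$), not better.

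The missing idea is that the staircase region $\bigcup_{p<q}J_p\times J_q$ can be written as a union of only $N-1$ rectangles rather than $\binom{N}{2}$: take $R_i=J_i\times\bigl[\tfrac{i}{N},1\bigr]$ for $i=1,\dots,N-1$ (these are disjoint, with union exactly the off-diagonal blocks). Each $R_i$ is a genuine product set, so $\bigl|\int_{R_i}w\bigr|\le\|w\|_\Box$, and the rectangle contribution becomes $(N-1)\|w\|_\Box$ instead of $\binom{N}{2}\|w\|_\Box$. With $N=\lceil\|w\|_\Box^{-1/2}\rceil\ge 2$ this gives
\[
(N-1)\|w\|_\Box+\frac{2}{N}\cdot\frac{1}{2}\cdot 2\;\le\;\|w\|_\Box^{-1/2}\|w\|_\Box+\sqrt{\|w\|_\Box}=2\sqrt{\|w\|_\Box},
\]
which is exactly the desired bound. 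This is what the paper does; your handling of the diagonal triangles and of arbitrary $S,T$ is fine, so once you replace the $\binom{N}{2}$-rectangle count by the $(N-1)$-rectangle staircase, the argument goes through with the stated constant.
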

\begin{proof}
Let $\Omega=\{ (x,y): 0\leq x\leq y \leq 1\}$ denote the subset of points above the diagonal in $[0,1]^2$.
Define $k=\lceil\frac{1}{\sqrt{\|w\|_\Box}}\rceil$, which is a positive integer.  Now, we can decompose 
$\Omega$ into $k-1$ rectangles and $k$ triangles as shown in Figure \ref{rectangles}. Precisely, the $i$-th rectangle has width $\frac{1}{k}$ and ranges from $y=\frac{i}{k}$ to $y=1$, and each triangle has base and height equal to $\frac{1}{k}$. By the definition of cut-norm, the integral of $w$ over each of the rectangles is at most $\|w\|_\Box$, in absolute value. Also, each of the triangles has measure $\frac{1}{2k^2}$, and there are $k$ triangles in total. Since $|w|$ is bounded by 2, the integral of $w$ over the triangles is at most $ \frac{1}{2k^2}(2)(k)=\frac{1}{k}$, in absolute value.  
\begin{figure}[ht]
\label{rectangles}
\centerline{\includegraphics[width=0.45\textwidth]{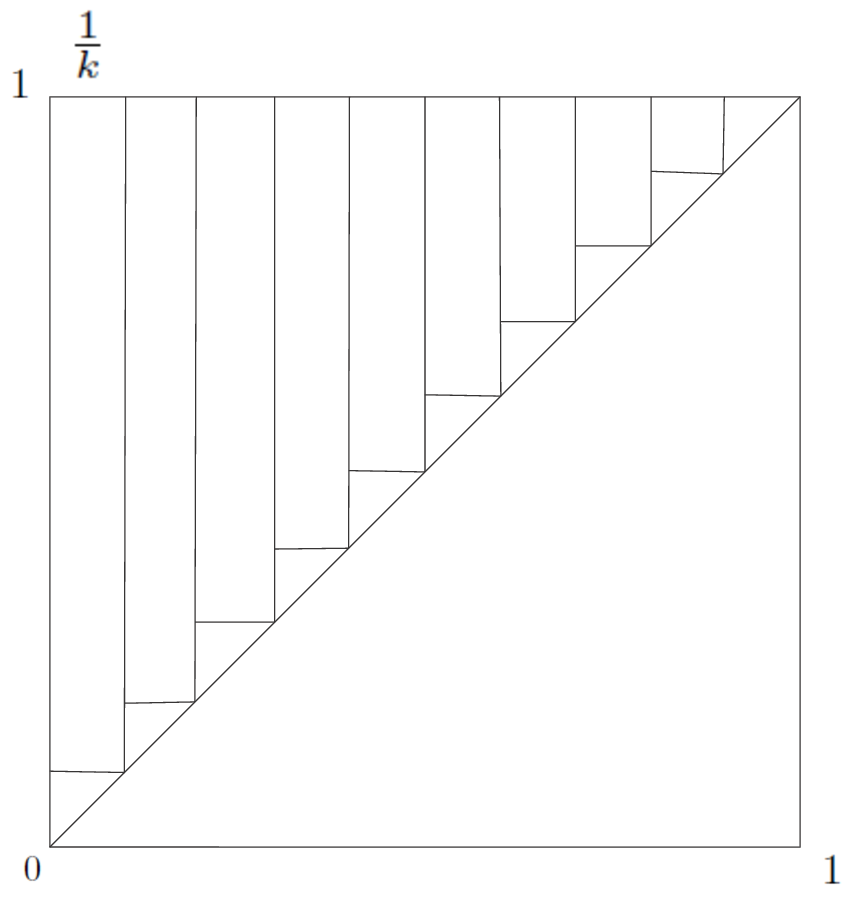}}
\caption{The decomposition of the set of points above the diagonal as used in the proof of Lemma \ref{lem:w-chi-cut}}
\end{figure}
Therefore, we have 
\begin{eqnarray}\label{eq-estimate}
\left|\int_0^1\int_0^1w\chi (x,y)dxdy\right|&\leq& \frac{1}{k}+(k-1)\|w\|_\Box\nonumber\\
&\leq&\sqrt{\|w\|_\Box}+(\frac{1}{\sqrt{\|w\|_\Box}})\|w\|_\Box=2\sqrt{\|w\|_\Box}.
\end{eqnarray}
For arbitrary subsets  $A$ and $B$ of $[0,1]$, let $\chi_{A\times B}$ denote the characteristic function of the subset $A\times B$ of $[0,1]^2$.
Applying (\ref{eq-estimate}) to $ w\chi_{A\times B}$ instead of $w$, we get $|\int_0^1\int_0^1 w\chi_{A\times B}\chi (x,y)dxdy|\leq 2\sqrt{\|w\chi_{A\times B}\|_\Box}\leq 2\sqrt{\|w\|_\Box}$, which proves that $\|w\chi\|_\Box\leq 2\sqrt{\|w\|_\Box}$.
\end{proof}
\begin{lemma}\label{lem:Gamma-cts}
Let $w_1$ and $w_2$ be elements of $\cal W_0$. Then $|\Gamma (w_1)-\Gamma (w_2)|\leq 2\|w_1-w_2\|_\Box+4\sqrt{\|w_1-w_2\|_\Box}$.
\end{lemma}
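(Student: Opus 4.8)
The plan is to bound $|\Gamma(w_1,A) - \Gamma(w_2,A)|$ uniformly over all measurable $A \subseteq [0,1]$, and then take suprema. Fix $A \in \mathcal{A}$ and set $v = w_1 - w_2$, so $\|v\|_\Box \leq \|w_1\|_\Box + \|w_2\|_\Box \leq \frac{1}{2}$ and $|v| \leq 2$. Writing $\Gamma(w,A)$ in the integral form of Definition \ref{def:Gamma}, the key observation is that the two terms of $\Gamma$ differ, after the outer $[\cdot]_+$ is applied, by at most the $L^1$-type quantity controlled by the inner integral of $v$. Using the elementary inequality $|[a]_+ - [b]_+| \leq |a-b|$, we get
\begin{eqnarray*}
|\Gamma(w_1,A) - \Gamma(w_2,A)| &\leq& \int\!\!\int_{y<z}\Bigl| \int_{x\in A\cap[0,y]} (v(x,z)-v(x,y))\,dx \Bigr|\,dy\,dz\\
&&+\; \int\!\!\int_{y<z}\Bigl| \int_{x\in A\cap[z,1]} (v(x,y)-v(x,z))\,dx \Bigr|\,dy\,dz.
\end{eqnarray*}
Each of the four inner integrals of $v$ is an integral of $v$ over an axis-parallel rectangle of the form $(A\cap[0,y])\times\{z\}$, etc. — but we need to integrate over $y$ and $z$ too, so we must recognize these double integrals as integrals against $v\chi$ (or $v$ against a triangular region) over rectangles determined by $A$.

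The heart of the argument is to reorganize, say, the first term. Expanding the absolute value by splitting into the region where the inner integral is positive and where it is negative costs nothing, and after this the quantity becomes $\pm$ the integral of $v$ over a region of the form $\{(x,y,z): x\in A,\ x\leq y < z\}$ intersected with the sign-region, integrated appropriately. The trick is to integrate out one variable: for fixed $x,y$ with $x \leq y$, integrating $v(x,z)$ over $z \in (y,1]$ and $v(x,y)$ over the same range gives, up to constants, a quantity of the form $\int_A \int_{x\leq y} \bigl( (\text{integral of } v(x,\cdot) \text{ over } [y,1]) - (1-y)v(x,y)\bigr)\,dy\,dx$. This is exactly the kind of expression that Lemma \ref{lem:w-chi-cut} is designed for: it is a bounded linear functional of $v$ against characteristic functions times $\chi$, hence bounded by $2\|v\chi\|_\Box$ plus lower-order terms involving $\|v\|_\Box$ directly on rectangles. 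Applying Lemma \ref{lem:w-chi-cut} (legitimate since $\|v\|_\Box \leq \frac{1}{2}$) yields $\|v\chi\|_\Box \leq 2\sqrt{\|v\|_\Box}$, and the rectangle terms contribute $O(\|v\|_\Box)$; summing the contributions over the first and second terms of $\Gamma$ gives the bound $2\|v\|_\Box + 2\sqrt{\|v\|_\Box}$ after tracking constants. Taking $\sup_A$ on both sides and using $|\sup_A \Gamma(w_1,A) - \sup_A \Gamma(w_2,A)| \leq \sup_A |\Gamma(w_1,A) - \Gamma(w_2,A)|$ finishes the proof.

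The main obstacle I expect is the bookkeeping in the middle step: correctly identifying each of the four triple integrals of $v$ as a sum of (i) an integral of $v\chi$ over an $A$-dependent rectangle, to which Lemma \ref{lem:w-chi-cut} applies, and (ii) genuinely rectangular integrals of $v$ bounded directly by $\|v\|_\Box$ — and doing so in a way that the sign-splitting inside the $[\cdot]_+$ does not break the rectangular structure. One must be careful that after pulling the absolute value outside the inner integral, the resulting region (parametrized by whether the inner integral is $\geq 0$) is still of product form in the relevant variables, or can be handled by a further application of the cut-norm over an appropriate subset. Once the geometry is pinned down, the constants fall out of the two cited lemmas, and the $\sqrt{\|w_1-w_2\|_\Box}$ term is the unavoidable price of passing through Lemma \ref{lem:w-chi-cut}.
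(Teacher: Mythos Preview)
Your overall strategy is sound and uses the right ingredients, but the middle step---the one you yourself flag as the obstacle---does not work as you describe it. After applying $|[a]_+-[b]_+|\le |a-b|$ you are left with
\[
\int\!\!\int_{y<z}\Bigl|\int_{x\in A\cap[0,y]}(v(x,z)-v(x,y))\,dx\Bigr|\,dy\,dz,
\]
and you then propose to ``integrate out $z$'' to obtain an expression like $\int_A\int_{x\le y}(\ldots)\,dy\,dx$. That computation is only valid \emph{without} the absolute value; once the modulus is inside the $(y,z)$-integral, the sign region depends jointly on $(y,z)$ and is not of product form, so it is not an integral of $v$ or $v\chi$ over a rectangle. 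Your worry in the last paragraph is exactly right, and the proposal does not resolve it.

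The paper avoids this difficulty by a different decomposition. Rather than using $|[a]_+-[b]_+|\le|a-b|$, it writes
\[
w_1(x,z)-w_1(x,y)=\bigl(w_1-w_2\bigr)(x,z)+\bigl(w_2(x,z)-w_2(x,y)\bigr)+\bigl(w_2-w_1\bigr)(x,y)
\]
inside $[\,\cdot\,]_+$ and uses subadditivity $[a+b+c]_+\le[a]_++[b]_++[c]_+$. The middle term reproduces $\Gamma_1(w_2,A)$ exactly. For the two error terms, the inner integral depends on only one of $y,z$, so the bound $\int_0^1 h(y)\,dy\le h(y_0)$ for some $y_0$ (and similarly with $z_0$) collapses one outer variable. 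After that, the sign region for $[\,\cdot\,]_+$ depends on a single variable, and the remaining double integral is over a genuine rectangle $(A\cap[0,y_0])\times T_1$ for the first term (giving $\|v\|_\Box$) and over $A\times T_2$ intersected with $\{x\le y\}$ for the second (giving $\|v\chi\|_\Box$). Your approach can in fact be salvaged by first splitting $v(x,z)-v(x,y)$ via the triangle inequality into two pieces, each of whose sign region depends on only one of $y,z$; this works but yields larger constants than the stated $2\|v\|_\Box+2\sqrt{\|v\|_\Box}$.
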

\begin{proof}
Let
\begin{eqnarray*}
\Gamma _1(w,A)&=&\int\int_{y<z}\left[ \int_{x\in A\cap [0,y]} \left(w(x,z)-w(x,y) \right)dx\right]_+dy dz,\\
\Gamma _2(w,A)&=&\int\int_{y<z}\left[ \int_{x\in A\cap [z,1]}\left(w(x,y)-w(x,z)\right)dx\right]_+dy dz,
\end{eqnarray*}
so $\Gamma(w,A)=\Gamma_1(w,A)+\Gamma_2(w,A)$.
Fix a measurable set $A\in\cal A$. Using again the inequality $[x+y]_+\leq [x]_++[y]_+$, we obtain that 
\begin{eqnarray*}
\Gamma _1(w_1,A)&=&\int\int_{y<z}\left[ \int_{x\in A\cap [0,y]} \left(w_1(x,z)-w_1(x,y) \right)dx\right]_+dy dz\\
&\leq&\int\int_{y<z}\left[ \int_{x\in A\cap [0,y]} \left(w_1(x,z)-w_2(x,z) \right)dx\right]_+dy dz\\
&+&\int\int_{y<z}\left[ \int_{x\in A\cap [0,y]} \left(w_2(x,z)-w_2(x,y) \right)dx\right]_+dy dz\\
&+&\int\int_{y<z}\left[ \int_{x\in A\cap [0,y]} \left(w_2(x,y)-w_1(x,y) \right)dx\right]_+dy dz.
\end{eqnarray*}
Recall that a function on $[0,1]$ attains a value at least as large as the average of the function at some point. Therefore there exists $y_0,z_0\in [0,1]$ such that
\begin{eqnarray*}
\Gamma _1(w_1,A)&\leq&\int_{y_0<z}\left[ \int_{x\in A\cap [0,y_0]} \left(w_1(x,z)-w_2(x,z) \right)dx\right]_+ dz\\
&+&\Gamma _1(w_2,A)\\
&+&\int_{y<z_0}\left[ \int_{x\in A\cap [0,y]} \left(w_2(x,y)-w_1(x,y) \right)dx\right]_+dy \\
&=&\int_{z\in T_1}\int_{x\in A\cap [0,y_0]} \left(w_1(x,z)-w_2(x,z) \right)dx dz\\
&+&\Gamma _1(w_2,A)\\
&+&\int_{y\in T_2}\int_{x\in A\cap [0,y]} \left(w_2(x,y)-w_1(x,y) \right)dx dy,\\
\end{eqnarray*}
where $T_1$ and $T_2$ are the appropriate sets of points which make the associated expressions positive. From the definition of the cut-norm, it then follows that
$$\Gamma _1(w_1,A)-\Gamma _1(w_2,A)\leq \|w_1-w_2\|_\Box+\|(w_1-w_2)\chi\|_\Box.$$
Similarly, by switching $w_1$ and $w_2$, we get $\Gamma _1(w_2,A)-\Gamma _1(w_1,A)\leq \|w_1-w_2\|_\Box+\|(w_1-w_2)\chi\|_\Box,$ which implies that
$$|\Gamma _1(w_1,A)-\Gamma _1(w_2,A)|\leq \|w_1-w_2\|_\Box+\|(w_1-w_2)\chi\|_\Box$$
holds for every  subset $A$. Moreover, one can prove the analogus result for $\Gamma _2$. Thus,
\begin{eqnarray*}
|\Gamma (w_1,A)-\Gamma (w_2,A)|&\leq &|\Gamma _1(w_1,A)-\Gamma _1(w_2,A)|+|\Gamma _2(w_1,A)-\Gamma _2(w_2,A)|\\
&\leq & 2\|w_1-w_2\|_\Box+2\|(w_1-w_2)\chi\|_\Box.
\end{eqnarray*}
Since $\Gamma(w_i)=\sup_A\Gamma(w_i,A)$ for $i=1,2$, it follows that
$$|\Gamma (w_1)-\Gamma (w_2)|\leq 2\|w_1-w_2\|_\Box+2\|(w_1-w_2)\chi\|_\Box.$$
This fact, together with Lemma \ref{lem:w-chi-cut}, finishes the proof.
\end{proof}

We are now ready to prove our continuity result. 
In order to study the limit of the sequence $\{\Gamma^*(G_n)\}$, we need to define the following parameter, which is a generalized notion of $\Gamma$. 
Recall that two functions $u, w\in{\mathcal W}_0$ are equivalent ({\it i.e.} $u\approx w$) precisely when $\delta_\Box(u,w)=0$.

\begin{definition}
\label{def:tilde}
Let $w$ be a bounded function in $\cal W$. We define the new parameter $\tilde{\Gamma}$ to be
$$\tilde{\Gamma}(w):=\inf_{w'\approx w}\Gamma(w')=\inf\{\Gamma(w'): \delta_\Box(w,w')=0\}.$$
\end{definition}

The lemmas above lead to the following theorem, which establishes the continuity of the parameter $\tilde{\Gamma}$ on the space ${\cal W}_0$ with the cut-distance $\delta_\Box$.

\begin{theorem}
\label{thm:Gamma-cts}
Let  $w\in {\cal W}_0$ be the limit of a $\delta_{\Box}$-convergent sequence $\{w_n\}_{n\in {\mathbb N}}$ of functions in ${\cal W}_0$. Then  
 $\{\tilde{\Gamma}(w_n)\}_{n\in {\mathbb N}}$ converges to $\tilde\Gamma (w)$ as $n\rightarrow \infty$.
\end{theorem}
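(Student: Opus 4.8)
The plan is to deduce the result from Lemma \ref{lem:Gamma-cts} by a standard reduction from $\Gamma$ to $\tilde\Gamma$. The first technical point is that $\tilde\Gamma$ does not change if we rescale $w$ by a constant factor and then compensate, so I would begin by observing that for any $\lambda>0$ one has $\Gamma(\lambda w) = \lambda \, \Gamma(w)$ (immediate from Definition \ref{def:Gamma}, since every term in the integrand scales linearly), and hence $\tilde\Gamma(\lambda w)=\lambda\,\tilde\Gamma(w)$; likewise $\|\lambda w\|_\Box = \lambda\|w\|_\Box$ and $\delta_\Box(\lambda w_1, \lambda w_2)=\lambda\,\delta_\Box(w_1,w_2)$. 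Since Lemma \ref{lem:Gamma-cts} requires the cut-norms to be bounded by $\tfrac14$, and the functions in $\mathcal W_0$ take values in $[0,1]$ so have cut-norm at most $1$, I would replace $w_n$ and $w$ throughout by $\tfrac14 w_n$ and $\tfrac14 w$. This is harmless: proving $\tilde\Gamma(\tfrac14 w_n)\to\tilde\Gamma(\tfrac14 w)$ is equivalent to proving $\tilde\Gamma(w_n)\to\tilde\Gamma(w)$. So from now on assume all functions have cut-norm at most $\tfrac14$.

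Next I would upgrade Lemma \ref{lem:Gamma-cts}, which bounds $|\Gamma(w_1)-\Gamma(w_2)|$ in terms of $\|w_1-w_2\|_\Box$, to the corresponding statement for $\tilde\Gamma$ in terms of $\delta_\Box$. The key fact is that for any $\phi\in\Phi$ the measure-preserving change of variables gives $\Gamma(w_1^\phi,A)=\Gamma(w_1, \phi(A))$ up to the obvious relabelling — more precisely $\Gamma$ is a $\Phi$-invariant function, i.e. $\Gamma(w^\phi)=\Gamma(w)$ for all $\phi\in\Phi$ (this follows because the defining double integral in Definition \ref{def:Gamma}, after substituting $x\mapsto\phi(x)$, $y\mapsto\phi(y)$, $z\mapsto\phi(z)$, is unchanged once one replaces $A$ by $\phi^{-1}(A)$ and uses that $\phi$ preserves the order-structure only up to measure — actually the cleanest route is: $\Gamma$ factors through homomorphism-type integrals that are manifestly $\Phi$-invariant). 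Granting $\Phi$-invariance of $\Gamma$, we get for any $\phi$
\[
\tilde\Gamma(w_1)\le \Gamma(w_1^\phi) = \Gamma(w_1^\phi),\qquad \tilde\Gamma(w_2)\le\Gamma(w_2^\psi),
\]
and combining with Lemma \ref{lem:Gamma-cts} applied to $w_1$ and $w_2^\phi$ (both still of cut-norm $\le\tfrac14$, since $\|w_2^\phi\|_\Box=\|w_2\|_\Box$),
\[
\tilde\Gamma(w_1)-\tilde\Gamma(w_2)\;\le\;\Gamma(w_1)-\Gamma(w_2^\phi)\;\le\;2\|w_1-w_2^\phi\|_\Box+2\sqrt{\|w_1-w_2^\phi\|_\Box}.
\]
Taking the infimum over $\phi\in\Phi$ and then symmetrizing, I obtain
\[
|\tilde\Gamma(w_1)-\tilde\Gamma(w_2)|\;\le\;2\,\delta_\Box(w_1,w_2)+2\sqrt{\delta_\Box(w_1,w_2)}.
\]

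Finally, since $\{w_n\}$ converges to $w$ in the cut distance $\delta_\Box$, we have $\delta_\Box(w_n,w)\to 0$, and the displayed inequality gives $|\tilde\Gamma(w_n)-\tilde\Gamma(w)|\to 0$, which is the claim. I expect the main obstacle to be the careful justification that $\Gamma$ is $\Phi$-invariant and, relatedly, that the passage from the norm bound of Lemma \ref{lem:Gamma-cts} to the $\delta_\Box$ bound is legitimate — in particular one must check that applying Lemma \ref{lem:Gamma-cts} to the pair $(w_1, w_2^\phi)$ is allowed (the cut-norm hypothesis survives because $\|\cdot\|_\Box$ is $\Phi$-invariant) and that the infimum defining $\delta_\Box$ interacts correctly with the concave function $t\mapsto 2t+2\sqrt t$ (it does, since that function is increasing, so $\inf_\phi(2\|w_1-w_2^\phi\|_\Box+2\sqrt{\|w_1-w_2^\phi\|_\Box}) = 2\delta_\Box+2\sqrt{\delta_\Box}$). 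The rescaling step and the final limit are routine.
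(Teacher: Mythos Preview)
Your argument has a genuine gap: the claim that $\Gamma$ is $\Phi$-invariant is false. The definition of $\Gamma(w,A)$ in Definition~\ref{def:Gamma} involves the conditions $y<z$, $x\in A\cap[0,y]$, and $x\in A\cap[z,1]$, all of which depend on the \emph{order} structure of $[0,1]$. A generic measure-preserving bijection $\phi\in\Phi$ does not preserve this order, so the change of variables $x\mapsto\phi(x)$, $y\mapsto\phi(y)$, $z\mapsto\phi(z)$ does not carry the region of integration to itself. In particular, $\Gamma$ does \emph{not} factor through homomorphism-type integrals: those are invariant precisely because they involve no order constraints. A concrete counterexample: if $w$ is diagonally increasing (so $\Gamma(w)=0$) and $\phi$ swaps two disjoint subintervals of $[0,1]$, then $w^\phi$ is typically not diagonally increasing and $\Gamma(w^\phi)>0$. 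Indeed, if $\Gamma$ were $\Phi$-invariant then $\tilde\Gamma\equiv\Gamma$ and Definition~\ref{def:tilde} would be vacuous.

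Once $\Phi$-invariance fails, your displayed inequality $\tilde\Gamma(w_1)-\tilde\Gamma(w_2)\le\Gamma(w_1)-\Gamma(w_2^\phi)$ no longer follows, because you cannot simultaneously replace $\tilde\Gamma(w_1)$ by $\Gamma(w_1)$ from above and $\tilde\Gamma(w_2)$ by $\Gamma(w_2^\phi)$ from below. The paper's proof circumvents this by treating the two directions separately: to bound $\limsup_n\tilde\Gamma(w_n)$ above, it first fixes a near-optimal $\phi_m$ for $w$ (so $\Gamma(w^{\phi_m})\le\tilde\Gamma(w)+\tfrac1m$), then uses Theorem~2.3 of \cite{borgs2011} to produce maps $\psi_n$ with $\|w_n^{\psi_n}-w^{\phi_m}\|_\Box\to 0$, and only then applies Lemma~\ref{lem:Gamma-cts} to the pair $(w_n^{\psi_n},w^{\phi_m})$. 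The reverse inequality is handled analogously. Your argument can in fact be repaired along similar lines---choose a near-optimal $\psi$ for $w_2$, leave $\phi$ free for $w_1$, bound $\tilde\Gamma(w_1)-\tilde\Gamma(w_2)\le\Gamma(w_1^\phi)-\Gamma(w_2^\psi)+\epsilon$, and then take $\inf_\phi$---but this is essentially the paper's approach, not the shortcut you proposed.
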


\begin{proof}
By the definition of $\tilde{\Gamma}$, for each positive integer $m$ there exists  an element $u_m\in{\mathcal W}_0$ such that  $\delta_\Box(w,u_m)=0$ and $|\Gamma (u_m)-\tilde\Gamma (w)|\leq\frac{1}{m}$. Fix such a sequence of graphons $\{ u_m\}_{m=1}^{\infty}$.

Fix $m\in {\mathbb N}$. Then 
\[
\delta_\Box(w_n,u_m)=\delta_{\Box}(w_n,w)\rightarrow 0,
\]
as $n$ goes to infinity.
By the definition of cut-distance, this convergence implies that 
there exist maps $\psi_{n}\in\Phi$ such that 
$$\|w_{n}^{\psi_n}-u_m\|_\Box\rightarrow 0\ \mbox{ as }n\rightarrow \infty.$$
 By Lemma \ref{lem:Gamma-cts}   we have,
 $$\Gamma (w_n^{\psi_n}) \rightarrow \Gamma(u_m).$$
 Thus, for every $m\in {\mathbb N}$,
 $$\limsup_{n\in \mathbb N} \tilde{\Gamma} (w_n)\leq 
 \limsup_{n\in \mathbb N} \Gamma (w_n^{\psi_n})=
 \Gamma (u_m)\leq \tilde{\Gamma}(w)+\frac{1}{m},$$
 which implies that $\limsup_{n\in {\mathbb N}}\tilde{\Gamma}(w_n)\leq \tilde\Gamma (w)$.
  

To prove the other inequality, let $\gamma:=\liminf_{n\in {\mathbb N}}\tilde{\Gamma}(w_n)$, and recall that, by assumption, $\delta_\Box(w_n,w)\rightarrow 0$ as $n\rightarrow \infty$.
Fix $0<\epsilon<1$, and let $n\in {\mathbb N}$ be chosen such that it satisfies
$$
 \delta_\Box(w_n,w)<\frac{\epsilon^2}{18^2}, \mbox{ and }
|\tilde{\Gamma}(w_n)-\gamma|<\frac{\epsilon}{3}.
$$
In addition, let $w'_n\in{\mathcal W}_0$ be such that $\delta_\Box(w'_n,w_n)=0$ and $|\Gamma(w'_n)-\tilde{\Gamma}(w_n)|<\epsilon/3$. By definition of the $\delta_\Box$-distance, there exists $\phi\in \Phi$ such that $\|w'_n-w^\phi\|_\Box<\frac{\epsilon^2}{18^2}$ and thus, by Lemma \ref{lem:Gamma-cts}, $|\Gamma (w'_n)-\Gamma (w^\phi)|\leq 6\sqrt{\|w'_n-w^\phi\|_\Box}<\epsilon/3$. Thus,
\begin{eqnarray*}
|\Gamma(w^\phi)-\gamma |&\leq&
|\Gamma(w^\phi)-\Gamma(w'_n)|+|\Gamma(w'_n)-\tilde{\Gamma}(w_n)|+|\tilde{\Gamma}(w_n)-\gamma|\\
&<&\epsilon.
\end{eqnarray*}
Therefore, for every $0<\epsilon<1$, $\tilde\Gamma(w)\leq \Gamma(w^\phi)\leq \liminf_{n\in {\mathbb N}}\tilde{\Gamma}(w_n)+\epsilon$.
Combining this with the lower bound, we get $$\limsup_{n\in {\mathbb N}}\tilde{\Gamma}(w_n)\leq \tilde\Gamma (w)\leq\liminf_{n\in {\mathbb N}}\tilde{\Gamma}(w_n),$$ which implies that $\lim_{n\rightarrow\infty}\tilde{\Gamma}(w_n)=\tilde{\Gamma}(w)$.

\end{proof}

\begin{corollary}\label{cor:contin_Gamma*}
Let $w\in \cal W_0$ be the limit of a convergent sequence $\{G_n\}_{n\in {\mathbb N}}$ of graphs with $|V(G_n)|\rightarrow \infty$. Then  
 $\{\Gamma^*(G_n)\}_{n\in {\mathbb N}}$ converges to $\tilde\Gamma (w)$ as $n\rightarrow \infty$.
\end{corollary}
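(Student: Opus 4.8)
The plan is to deduce this corollary from Theorem~\ref{thm:Gamma-cts} by showing that the graph sequence $\{G_n\}$, viewed through its function representatives, is $\delta_\Box$-convergent to $w$, and that $\Gamma^*(G_n)$ and $\tilde\Gamma(w_{G_n})$ differ by a quantity tending to zero. First I would fix, for each $n$, a linear order $\prec_n$ of $V(G_n)$ achieving the minimum in the definition of $\Gamma^*$, i.e.~$\Gamma^*(G_n)=\Gamma^*(G_n,\prec_n)$, and let $w_{G_n}\in{\cal W}_0$ be the associated function representative. By Corollary~\ref{cor:gamma-graph-function} applied with this order, $\Gamma^*(G_n,\prec_n)=\Gamma(w_{G_n})+{\cal O}(1/|V(G_n)|)$. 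The next step is to pass from $\Gamma(w_{G_n})$ to $\tilde\Gamma(w_{G_n})$: since $\tilde\Gamma(w_{G_n})=\inf_{\phi\in\Phi}\Gamma(w_{G_n}^\phi)\le\Gamma(w_{G_n})$, one direction is immediate, but I also need that the minimizing order already makes $\Gamma(w_{G_n})$ essentially optimal over all of $\Phi$. Here the key observation is that relabelling the vertices of $G_n$ corresponds (up to a set of measure zero / an error of order $1/|V(G_n)|$) to acting by a measure-preserving map $\phi\in\Phi$ on $w_{G_n}$, and conversely every $\phi\in\Phi$ can be approximated in cut norm by a relabelling together with a refinement of the partition; combining this with Corollary~\ref{cor:gamma-graph-function} and the definition of $\Gamma^*(G_n)$ as a minimum over orders gives $\tilde\Gamma(w_{G_n})=\Gamma^*(G_n)+{\cal O}(1/|V(G_n)|)$.

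Having reduced to $\tilde\Gamma$, I would invoke the theory of graph limits: since $\{G_n\}$ converges in the sense of Definition~\ref{def:conv}, Theorem~2.3 of~\cite{borgs2011} guarantees that the function representatives $w_{G_n}$ form a $\delta_\Box$-Cauchy sequence whose limit is (an element of the $\Phi$-equivalence class of) $w$; that is, $\delta_\Box(w_{G_n},w)\to 0$. Theorem~\ref{thm:Gamma-cts} then applies directly to the sequence $\{w_{G_n}\}$ (after the harmless normalization ensuring cut norms are at most $\tfrac14$, which holds automatically since these are $[0,1]$-valued functions, or else by scaling, as $\tilde\Gamma$ and the hypotheses behave well under the global rescaling used in Lemma~\ref{lem:Gamma-cts}), yielding $\tilde\Gamma(w_{G_n})\to\tilde\Gamma(w)$. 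Finally, combining $\tilde\Gamma(w_{G_n})=\Gamma^*(G_n)+{\cal O}(1/|V(G_n)|)$ with $|V(G_n)|\to\infty$ gives $\Gamma^*(G_n)\to\tilde\Gamma(w)$, as claimed.

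The main obstacle is the middle step: establishing that $\tilde\Gamma(w_{G_n})$ and $\Gamma^*(G_n)$ agree up to ${\cal O}(1/|V(G_n)|)$. The inequality $\tilde\Gamma(w_{G_n})\le\Gamma(w_{G_n,\prec_n})+{\cal O}(1/n)=\Gamma^*(G_n)+{\cal O}(1/n)$ is the easy half. For the reverse inequality I must rule out the possibility that some $\phi\in\Phi$ which does \emph{not} arise from a vertex permutation achieves a substantially smaller value of $\Gamma(w_{G_n}^\phi)$ than any relabelling does. The way around this is to note that $\Gamma(w_{G_n}^\phi)$ depends on $w_{G_n}^\phi$ only through cut-norm-type integrals (as exploited in Lemma~\ref{lem:Gamma-cts}), so a small perturbation of $\phi$ in the appropriate sense changes $\Gamma$ by little; one then approximates an arbitrary $\phi$ by a step function on a common refinement of the $n$-part partition, observes that such a step-function map corresponds to a relabelling of a blow-up of $G_n$ (which has the same $\Gamma^*$ up to ${\cal O}(1/n)$ since blow-ups do not change homomorphism densities and, by Corollary~\ref{cor:gamma-graph-function}, barely change $\Gamma^*$), and concludes. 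I expect the bookkeeping here to be somewhat delicate but entirely routine given Corollary~\ref{cor:gamma-graph-function} and Lemma~\ref{lem:Gamma-cts}; no new ideas beyond what is already in Section~\ref{Proofs} should be required.
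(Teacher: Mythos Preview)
Your overall plan is exactly the paper's: pass from $\Gamma^*(G_n)$ to the function parameter via Corollary~\ref{cor:gamma-graph-function}, and then invoke Theorem~\ref{thm:Gamma-cts}. The paper's own proof is in fact only two sentences and says precisely this, with $w_{G_n}$ taken with respect to a $\Gamma^*$-optimal ordering.

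Where you go beyond the paper is in your ``main obstacle''. The paper writes $\Gamma^*(G_n)=\Gamma(w_{G_n})+{\cal O}(1/n)$ and then simply says ``Applying Theorem~\ref{thm:Gamma-cts}, the result follows''; it never explains why $\Gamma(w_{G_n})$ with the optimal ordering should agree asymptotically with $\tilde\Gamma(w_{G_n})$, which is what Theorem~\ref{thm:Gamma-cts} actually controls. You are right that the inequality $\tilde\Gamma(w_{G_n})\le\Gamma(w_{G_n,\prec_n})$ is only one half, and that the reverse direction---ruling out a general $\phi\in\Phi$ that beats every vertex relabelling by more than $o(1)$---needs an argument the paper does not supply. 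Your proposed fix (approximate an arbitrary $\phi$ by an interval permutation on a refinement, i.e.\ a relabelling of a blow-up, and use the continuity of $\Gamma$ in cut norm from Lemma~\ref{lem:Gamma-cts} together with Corollary~\ref{cor:gamma-graph-function}) is a standard and workable line in the graph-limit setting; it is routine bookkeeping rather than a new idea, as you say.

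In short: same approach as the paper, but you are more scrupulous about a step the paper's terse proof leaves implicit.
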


\begin{proof}
For each $n\in{\mathbb N}$, let $w'_{G_n}$ be the step function representing $G_n$ with respect to an ordering $\prec'_n$ that is optimal for $\Gamma^*$. Thus, by Corollary \ref{cor:gamma-graph-function},
$$\liminf_{n\in{\mathbb N}}\Gamma^*(G_n)=\liminf_{n\in {\mathbb N}}\Gamma^*(G_n,\prec'_n)=\liminf_{n\in{\mathbb N}}\Gamma(w'_{G_n})\geq 
\liminf_{n\in{\mathbb N}}\tilde\Gamma(w'_{G_n}).$$
Clearly the sequence $\{w'_{G_n}\}$ converges to $w$ with respect to $\delta_\Box$- distance. 
Thus by Theorem \ref{thm:Gamma-cts},   
$$\liminf_{n\in{\mathbb N}}\Gamma^*(G_n)\geq \tilde\Gamma(w).$$
On the other hand, let $u\in{\mathcal W}_0$ be an element equivalent to $w$ such that $\tilde{\Gamma}(w)+\epsilon\geq \Gamma(u)$. 
Since the sequence  $\{G_n\}$ converges to $u$, there is a labelling of vertices of graphs $G_n$, corresponding to an ordering $\prec_n$, for which 
$\|w_{G_n}-w\|_\Box\rightarrow 0$. Thus by Lemma \ref{lem:Gamma-cts}, we have $\Gamma(w_{G_n})\rightarrow \Gamma(w)$. Therefore by another application of Corollary \ref{cor:gamma-graph-function} we have,
$$\epsilon+\tilde{\Gamma}(w)\geq \Gamma(u)=\lim_{n\rightarrow\infty}\Gamma(w_{G_n})=\lim_{n\rightarrow \infty}\Gamma^*(G_n,\prec_n)\geq \limsup_{n\in{\mathbb N}} \Gamma^*(G_n).$$
%
\end{proof}

In particular, if a convergent graph sequence $\{ G_n\}$  with limit $w$ has the property that $\{\Gamma^*(G_n)\}$ converges to zero, the above theorem states that $\tilde{\Gamma}(w)=0$. This implies that there exist functions $u$ with $\Gamma (u)$ arbitrarily small so that the graphs $\{ G_n\}$ have similar structure, in terms of homomorphism densities, as the random graph $G(n,u)$. We would like to conclude that the graphs $\{ G_n\}$ are consistent with having been formed by a random process with a linear embedding. However, it does not follow from our results that any function $u$ with $\Gamma (u)$ small is \lq\lq close" to a diagonally increasing function. We conjecture that, in fact, if $\Gamma (w)$ is small, then there exists a diagonally function $u$ which is close to $w$ in box distance. 

\begin{conjecture}
There exists a strictly increasing function $f$ which approaches  zero as $x\rightarrow 0$ such that: 

For every $w\in {\mathcal W}_0$, there exists $u\in {\mathcal W}_0$ with $\Gamma (u)=0$ and  $\| w-u\|_{\Box}\leq f(\Gamma (w))$. 
\end{conjecture}


\section*{Acknowledgements}

The authors wish to thank AARMS, MPrime and NSERC for supporting this work. Ghandehari and Janssen acknowledge the American Institute of Mathematics (AIM) for an invitation to attend the workshop on graph and hypergraph limits, August 2011.  Parts of this article was written when the second author was supported by  Fields Institute for Mathematical Research as a postdoctoral fellow affiliated to the ``Thematic Program on Abstract Harmonic Analysis, Banach and Operator Algebras''. 
We also thank the anonymous referee, whose constructive comments helped improve the paper.


\end{document}